\documentclass[12pt]{amsart}
\usepackage{amssymb}
\usepackage{amsthm}
\usepackage{amsmath}
\usepackage{enumerate}
\usepackage{graphicx}
\usepackage{mathrsfs}
\usepackage{xypic}
\usepackage{color}

\usepackage[latin1]{inputenc}

\newcommand{\NN}{\mathbb{N}}

\newcommand{\D}{\mathcal D}

\newcommand{\union}{\cup}

\newcommand{\Map}{\mathrm{Map}}

\theoremstyle{plain}

\newtheorem{theorem}{Theorem}[section]
\newtheorem*{theorem*}{Theorem}
\newtheorem{lemma}[theorem]{Lemma}

\newtheorem{cor}[theorem]{Corollary}

\theoremstyle{definition}
\newtheorem{defi}[theorem]{Definition}

\theoremstyle{remark}

\newtheorem{example}[theorem]{Example}

\begin{document}
\title{Isoperimetric Inequalities for the handlebody groups}
\author{Ursula Hamenst\"adt and Sebastian Hensel}
\date{\today}
\thanks{AMS subject classification: 20F65, 57M07.\\  
  Both authors are partially
  supported by the Hausdorff Center Bonn and the Hausdorff Institut
  Bonn. The second author is supported by the Max-Planck Institut
  f\"ur Mathematik Bonn}
\begin{abstract}
  We show that the mapping class group of a handlebody $V$ of genus at
  least $2$ has a Dehn function of at most exponential growth type.
\end{abstract}
\address{\hskip-\parindent
  Ursula Hamenst\"adt and Sebastian Hensel\\ Mathematisches Institut der Universit\"at Bonn \\
  Endenicher Allee 60\\
  D-53115 Bonn, Germany}
\email{ursula@math.uni-bonn.de}
\email{loplop@math.uni-bonn.de}
\maketitle

\section{Introduction}
A handlebody of genus $g\geq 2$ is a compact orientable $3$-manifold $V$
whose boundary $\partial V$ is a closed surface of genus $g$.
The \emph{handlebody group} ${\rm Map}(V)$ is the group of isotopy classes of
orientation preserving homeomorphisms of $V$. 
Via the natural restriction homomorphism, the group ${\rm Map}(V)$ 
can be viewed as a subgroup of the mapping class
group ${\rm Map}(\partial V)$ of $\partial V$. 
This subgroup is of infinite index, and it surjects onto the
outer automorphism group of the fundamental group of $V$
which is the free group with $g$ generators.

The handlebody group is finitely presented. Thus $\mathrm{Map}(V)$ can
be equipped with a word norm that is unique up to quasi-isometry.
Hence, the handlebody group carries a well-defined large-scale
geometry. However, this large scale geometry is not
compatible with the large-scale geometry of the
ambient group ${\rm Map}(\partial V)$. Namely, we showed in 
\cite{HH11} that the handlebody group is an exponentially
  distorted subgroup of the mapping class group of the boundary
  surface for every genus $g \geq 2$. 
Here, a finitely generated subgroup $H < G$ of a finitely generated
group $G$ is called \emph{exponentially distorted} if the following holds.
First, the word norm in $H$ of every element $h\in H$ can be bounded
from above by an exponential function 
in the word norm of $h$ in $G$. On the other hand, there
is no such bound with sub-exponential growth rate.

As a consequence, it is not possible to directly transfer geometric
properties from the mapping class group to the handlebody group.
In this note we initiate an investigation of the intrinsic large-scale
geometry of the handlebody group.

A particularly useful geometric invariant of a finitely
presented group $G$ is its \emph{Dehn function}, which 
can be defined as the isoperimetric function of a presentation complex
for $G$ (see Section~\ref{s:upperbound-and-dehn-function} for a
complete definition).
Although the Dehn function itself depends on the choice of a finite
presentation of $G$, the growth type of the Dehn function does not.
In fact, the growth type of the Dehn function is a quasi-isometry
invariant of $G$.

The mapping class group ${\rm Map}(\partial V)$ of $\partial V$
is automatic \cite{Mo95} and hence has quadratic Dehn function.
Since ${\rm Map}(V)$ is exponentially distorted
in ${\rm Map}(\partial V)$, this fact does not
provide any information on the Dehn function of ${\rm Map}(V)$.
On the other hand, for $g\geq 3$ 
the Dehn function of the outer automorphism group ${\rm Out}(F_g)$ of a 
free group $F_g$ on $g$ generators is exponential
\cite{HV96,BV95,BV10,HM10}. However, since the kernel of the
projection from the handlebody group to $\mathrm{Out}(F_g)$ is
infinitely generated \cite{McC85}, this fact also does not restrict
the Dehn function of $\mathrm{Map}(V)$.

The goal of this note is to give an upper bound for the
Dehn function of ${\rm Map}(V)$. We show
\begin{theorem}\label{dehn}
The handlebody group ${\rm Map}(V)$ satisfies an ex\-po\-nen\-tial
iso\-peri\-metric inequality, i.e. the growth of its
Dehn function is at most exponential.
\end{theorem}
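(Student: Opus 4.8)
\medskip
\noindent\textbf{Proof proposal.} The plan is to exploit the action of $\mathrm{Map}(V)$ on a complex of disk systems, combined with the quadratic isoperimetric inequality for $\mathrm{Map}(\partial V)$ and the exponential distortion estimate of \cite{HH11}. The organising principle is the standard one: a group acting cocompactly on a simply connected complex with finitely presented cell stabilizers is finitely presented, and its Dehn function is controlled by the combinatorial filling function of the complex together with the Dehn functions of the stabilizers and the lengths of the conjugators produced when filling a loop of the complex.

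Accordingly, the first step is to fix the complex $\mathcal{DS}(V)$ whose vertices are isotopy classes of disk systems of $V$ -- collections of $g$ disjoint essential disks cutting $V$ into a ball -- with edges recording elementary handle moves and two-cells recording the Hatcher--Thurston-type relations among these moves, and to verify (in the spirit of the connectivity theorems of Hatcher--Thurston and Suzuki for cut systems) that, for a suitable finite list of moves and relations, $\mathcal{DS}(V)$ is connected and simply connected, that $\mathrm{Map}(V)$ acts on it with finitely many orbits of cells, and that the cell stabilizers are finitely presented. The second step is to bound the Dehn functions of these stabilizers. Cutting $V$ along a disk system $E$ exhibits $\mathrm{Stab}(E)$ as built from the mapping class group of a planar surface (the $2g$-holed sphere obtained from cutting) and a finite group of symmetries of $E$, extended by the free abelian group of Dehn twists about the boundary curves of the disks of $E$; since the planar mapping class group has polynomial Dehn function and the twist subgroup is undistorted and almost central, $\mathrm{Stab}(E)$, and likewise each edge stabilizer, has polynomial -- in particular at most exponential -- Dehn function.

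The heart of the matter, and the main obstacle, is to bound the combinatorial filling function of $\mathcal{DS}(V)$, and here $\mathrm{Map}(\partial V)$ enters. A disk system of $V$ is in particular a cut system of $\partial V$, giving a $\mathrm{Map}(V)$-equivariant simplicial map from $\mathcal{DS}(V)$ to the Hatcher--Thurston complex of $\partial V$; the latter is simply connected and, since $\mathrm{Map}(\partial V)$ is automatic, its loops of length $\ell$ bound combinatorial discs with only $O(\ell^2)$ cells. Given a loop $c$ of combinatorial length $n$ in $\mathcal{DS}(V)$, one fills its image by a disc $D$ with $O(n^2)$ cells in the surface complex and then pushes $D$ back into $\mathcal{DS}(V)$. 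This last move is delicate because the image of $\mathcal{DS}(V)$ is a proper subcomplex and there is no naive Lipschitz retraction onto it -- the nearest meridians to a cut system of $\partial V$ need be neither disjoint nor a disk system. One must therefore correct $D$ cell by cell, and the cost of correcting one cell, i.e.\ of realising the correcting element of $\mathrm{Map}(\partial V)$ inside $\mathrm{Map}(V)$, is bounded by the distortion theorem of \cite{HH11}: this produces, for each cell of $D$, a subdisc of $\mathcal{DS}(V)$ of at most $\exp(Cn)$ cells, hence a filling of $c$ in $\mathcal{DS}(V)$ by at most $\exp(C'n)$ two-cells with conjugators of length at most $\exp(C'n)$.

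Assembling the pieces proves Theorem~\ref{dehn}. A null-homotopic word $w$ of length $n$ over a finite generating set of $\mathrm{Map}(V)$ traces a loop of length $O(n)$ in $\mathcal{DS}(V)$ under the orbit map; filling this loop as above rewrites $w$, at the cost of at most $\exp(Cn)$ defining relators of bounded length each conjugated by a word of length at most $\exp(Cn)$, into a word $w'$ of length at most $\exp(C'n)$ lying in a single vertex stabilizer; filling $w'$ by the polynomial Dehn function of that stabilizer costs $\mathrm{poly}(\exp(C'n)) = \exp(C''n)$. Summing the contributions exhibits a van Kampen diagram for $w$ of area at most $\exp(C'''n)$. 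The delicate point throughout is the third step: proving the connectivity of $\mathcal{DS}(V)$ and, above all, making the cell-by-cell correction of the surface filling quantitatively effective, so that the distortion estimate of \cite{HH11} is invoked only polynomially often and the resulting bound is a single exponential rather than an iterated exponential.
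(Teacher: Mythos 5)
There is a genuine gap, and it sits exactly at the step you flag as the heart of the matter: the ``cell-by-cell correction'' of the Hatcher--Thurston filling. The distortion theorem of \cite{HH11} bounds the word length \emph{in} $\mathrm{Map}(V)$ of an element that is already known to lie in $\mathrm{Map}(V)$ in terms of its word length in $\mathrm{Map}(\partial V)$; it gives no bound on area. When you correct a $2$--cell of the ambient filling disc by replacing its corners (cut systems of $\partial V$ which need not bound disks) with nearby disk systems, what you produce is a new small loop in $\mathcal{DS}(V)$, equivalently a relation in $\mathrm{Map}(V)$, whose \emph{length} you can indeed estimate by $\exp(Cn)$ using distortion --- but that loop still has to be \emph{filled} inside $\mathcal{DS}(V)$, and bounding that filling area is precisely the Dehn-function problem you set out to solve. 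Transporting a van Kampen diagram from an overgroup to a subgroup in this way requires a (coarse Lipschitz) retraction, which, as you yourself note, is not available here; without it the argument is circular. This is exactly why the introduction of the paper stresses that exponential distortion of $\mathrm{Map}(V)$ in the automatic group $\mathrm{Map}(\partial V)$ by itself yields no information on the Dehn function of $\mathrm{Map}(V)$.

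What is missing is an intrinsic mechanism for filling those corrected loops, and this is what the paper constructs instead of an ambient-filling argument. It builds a combing of a geometric model (the graph of rigid extended racks) by splitting sequences directed by a fixed disk system (Theorems~\ref{thm:connect} and~\ref{thm:connect-extended}), whose lengths are exponential in the distance between their endpoints; the surgery comparison results (Lemma~\ref{lemma:fulldisk}, Lemma~\ref{typical}, Corollary~\ref{cor:extend-or-pick-racks}) show that the combing paths starting at adjacent vertices can be chosen compatible, so that the ladder between them decomposes into loops lying in subgraphs $\mathcal{RER}(V,\D)$; these subgraphs are quasi-isometric to disk system stabilizers, are undistorted, and have quadratic Dehn function (Lemma~\ref{lemma:filling-in-map}), whence each ladder is filled with area polynomial in the (exponential) lengths of the combing paths (Lemma~\ref{lem:compare-adjacent}). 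The single exponential in Theorem~\ref{dehn} comes from the lengths of the combing paths only, not from invoking distortion per cell. Your steps on stabilizers and on assembling a filling from a cocompact action are fine in spirit (and close to Lemma~\ref{lemma:filling-in-map} and the final assembly in the proof of Theorem~\ref{thm:main}), but to make your third step work you would have to replace the appeal to distortion by exactly this kind of fellow-traveling/compatibility structure for distinguished paths in the disk-system model.
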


The strategy of proof for Theorem~\ref{dehn} is similar
to the strategy employed in \cite{HV96} to show an exponential upper
bound for the Dehn function of outer automorphism groups of free groups. 
We construct a graph which is a geometric model for the handlebody
group (a similar construction is used in \cite{HH11} in order 
to show exponential distortion of handlebody groups). 
Vertices of this graph correspond to isotopy classes of 
special cell decompositions of $\partial V$ containing the boundary
of a \emph{simple disk system} in their one-skeleton. 
A simple disk system is a collection of pairwise disjoint,
pairwise non-homotopic embedded disks in $V$ which decompose $V$ 
into simply connected regions.
We then use a a surgery
procedure for disk systems to define a distinguished class of paths in
this geometric model.
Although these paths are in general not quasi-geodesics for 
the handlebody group (see the example at the end of this note),
they are sufficiently well-behaved so that they can be
used to fill a cycle with area bounded by an exponential
function in the length of the cycle.

The organization of this note is as follows.
In Section 2 we introduce disk systems and special 
paths in the disk system graph. Section 3 discusses
a geometric model for the handlebody group. 
This model is used in Section 4 for the proof of Theorem~\ref{dehn}.

\section{Disk exchange paths}
\label{sec:disk-systems}
In this section we collect some facts about properly embedded disks in
a handlebody $V$ of genus $g\geq 2$.
In particular, we describe a surgery procedure that is
central to the construction of paths in the handlebody group.

\medskip
A disk $D$ in $V$ is called \emph{essential} if it is properly
embedded and if $\partial D$ is an essential simple closed curve on
$\partial V$.
A \emph{disk system} for $V$ is a set of pairwise disjoint essential
disks in $V$ no two of which are homotopic.
A disk system is called \emph{simple} if all of its complementary
components are simply connected. 
It is called \emph{reduced} if in
addition it has a single complementary component.   

We usually consider disks and disk systems only up to proper isotopy.
Furthermore, we will always assume that disks and disk systems are in
minimal position if they intersect. Here we say that two disk systems
$\D_1,\D_2$ are in \emph{minimal position} if their boundary multicurves 
intersect in the minimal number of points in their respective isotopy
classes and if every component of $\D_1\cap
\D_2$ is an embedded arc in $\D_1\cap \D_2$ 
with endpoints in $\partial \D_1 \cap \partial \D_2$.
Note that minimal position of disks is not unique; in particular the
intersection pattern $\D_1 \cap \D_2$ is not determined by the isotopy
classes of $\D_1$ and $\D_2$.

The following easy fact will be used frequently
throughout the article.
\begin{lemma}\label{lemma:han-action-on-disk-sys}
  The handlebody group acts transitively on the set of isotopy
  classes of reduced disk systems. 
  Every mapping class of $\partial V$ that fixes the isotopy class of
  a simple disk system is contained in the handlebody group.
\end{lemma}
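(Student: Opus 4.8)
The plan is to prove both assertions by cutting $V$ along a simple disk system into $3$-balls, extending boundary homeomorphisms over these balls via Alexander's trick, and carefully tracking the gluing data. I will use freely the standard fact that two essential disks in $V$ with isotopic boundary curves are properly isotopic (an embedded $2$-sphere in the irreducible manifold $V$ bounds a ball), so that the isotopy class of a disk system $\D$ is recorded by the isotopy class of the multicurve $\partial\D\subset\partial V$ and one may pass freely between disks in $V$ and curves on $\partial V$. I will also invoke two classical facts: an orientation-preserving homeomorphism of $S^2$ extends to one of $B^3$; and a homeomorphism between the boundaries of two homeomorphic compact orientable surfaces which preserves orientation on each boundary circle extends to an orientation-preserving homeomorphism of the surfaces.

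For the transitivity statement, first note that a count of Euler characteristics shows that a reduced disk system consists of exactly $g$ disks, so cutting $V$ along a reduced system $\D$ produces a single $3$-ball $P$ whose boundary sphere carries $2g$ pairwise disjoint embedded disks, grouped into $g$ pairs by orientation-reversing identification maps. Given two reduced systems $\D_1,\D_2$ with associated balls $P_1,P_2$, I would build a homeomorphism $h\colon P_1\to P_2$ as follows: since any two configurations of $2g$ disjoint disks on $S^2$ are equivalent under a homeomorphism of $S^2$, and since homeomorphisms of $S^2$ preserving such a configuration setwise realize all permutations of the disks, one first arranges an orientation-preserving homeomorphism $\partial P_1\to\partial P_2$ carrying the disk pattern of $\D_1$ onto that of $\D_2$ and matching the two pairings; one then chooses $h$ on one disk of each pair to be orientation-preserving and \emph{defines} it on the other disk of the pair so that $h$ commutes with the identifications (this keeps $h$ orientation-preserving on all $2g$ disks, since the identifications are orientation-reversing); finally one extends the resulting partial boundary homeomorphism over the rest of $\partial P_1$ (possible because it is orientation-preserving on each of the $2g$ bounding circles) and then over $P_1$ by Alexander's trick. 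By construction $h$ respects the regluing, hence descends to an orientation-preserving homeomorphism $\Phi\colon V\to V$ with $\Phi(\D_1)=\D_2$; so $[\Phi]\in\Map(V)$ realizes the transitivity.

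For the second statement, let $\D$ be a simple disk system with complementary $3$-balls $B_1,\dots,B_c$, and write $\Sigma_j\subset\partial V$ for the planar surface $\partial B_j$ cut out of $\partial V$. Let $\varphi$ be a homeomorphism of $\partial V$ fixing the isotopy class of $\D$; after an isotopy I may assume $\varphi(\partial\D)=\partial\D$, so that $\varphi$ induces a permutation $\sigma$ of the disks with $\varphi(\partial D_i)=\partial D_{\sigma(i)}$ and a compatible permutation $\tau$ of the balls with $\varphi(\Sigma_j)=\Sigma_{\tau(j)}$. Fix for each $i$ an orientation-preserving homeomorphism $\psi_i\colon D_i\to D_{\sigma(i)}$ extending $\varphi|_{\partial D_i}$. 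On each $\partial B_j$, the map $\varphi|_{\Sigma_j}$ together with the maps $\psi_i$ on the two copies of each disk $D_i$ meeting $\partial B_j$ fit together (they agree on the boundary circles by the choice of $\psi_i$) into an orientation-preserving homeomorphism $\partial B_j\to\partial B_{\tau(j)}$, which extends over $B_j$ by Alexander's trick to $\Phi_j\colon B_j\to B_{\tau(j)}$. Because the \emph{same} map $\psi_i$ is used on both copies of every reglued disk, the $\Phi_j$ agree along the regluing and assemble to an orientation-preserving homeomorphism $\Phi\colon V\to V$ with $\Phi|_{\partial V}=\varphi$; hence $[\varphi]\in\Map(V)$.

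I expect the only genuine difficulty to be the bookkeeping around the gluing maps: one must check that the boundary homeomorphisms can be chosen to commute with the identifications of the disk pairs (so that they descend to $V$), and that the induced maps on the complementary planar surfaces are orientation-consistent (so that the surface extension fact applies). The Euler characteristic count, the description of a reduced system as a ball with paired boundary disks, and the various orientation verifications are all routine.
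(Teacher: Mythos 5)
Your proof is correct and follows essentially the same route as the paper's: cut $V$ along the disk system into spotted balls, note that any two such spotted balls are homeomorphic by a homeomorphism respecting the spot pattern, and extend boundary homeomorphisms over the balls (Alexander's trick) compatibly with the regluing. You simply spell out the gluing and orientation bookkeeping that the paper's two-sentence argument leaves implicit.
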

\begin{proof}
  The first claim follows from the fact that the complement of a
  reduced disk system in $V$ is a ball with $2g$ spots and any two
  such manifolds are homeomorphic. 
  The second claim is immediate since every homeomorphism of the
  boundary of a spotted ball extends to the interior.
\end{proof}

Let $\D$ be a disk system. An \emph{arc relative to
  $\D$} is a continuous embedding $\rho:[0,1]\to\partial V$ whose
endpoints $\rho(0)$ and $\rho(1)$ are contained in $\partial\D$.
An arc $\rho$ is called \emph{essential} if
it cannot be homotoped into $\partial \D$ with fixed endpoints. 
In the sequel we always assume that arcs are essential and
that
the number of intersections of $\rho$ with $\partial \D$ is minimal in
its isotopy class.

Choose an orientation of the curves in $\partial \D$. 
Since $\partial V$ is oriented, this choice determines a left and a
right side of a component $\alpha$ of $\partial \D$ in a small annular
neighborhood of $\alpha$ in $\partial V$. We then say that an endpoint
$\rho(0)$ (or $\rho(1)$) of an arc $\rho$ \emph{lies to the right (or
  to the left) of $\alpha$}, if a small neighborhood 
$\rho([0,\epsilon])$ (or $\rho([1-\epsilon,1])$) of this
endpoint is contained in the right (or left) side of $\alpha$ in a
small annulus around $\alpha$.
A \emph{returning arc relative to $\D$} is an arc both of whose
endpoints lie on the same side of the 
boundary $\partial D$ of a disk $D$
in $\D$, and whose interior is disjoint from $\partial \D$.

\medskip
Let $E$ be a disk which is not disjoint from $\D$. 
An \emph{outermost arc} of $\partial E$ relative to $\D$ is
a returning arc $\rho$ relative to $\D$, with endpoints on 
the boundary of a disc $D\in \D$, such that
there is a component $E^\prime$ of $E\setminus\D$ whose boundary is
composed of $\rho$ 
and an arc $\beta\subset D$. The interior of $\beta$
is contained in the interior of $D$. We call such a disk $E^\prime$
an \emph{outermost component} of $E\setminus\D$.

For every disk $E$ which is not disjoint from $\D$ there are 
at least two distinct outermost components $E^\prime,E^{\prime\prime}$
of $E\setminus\D$.
There may also be components of $\partial E\setminus\D$ which
are returning arcs, but not outermost arcs. 
For example, a component of $E\setminus \D$ may be a rectangle bounded by two 
arcs contained in $\D$ and two subarcs of $\partial E$ with endpoints
on $\partial \D$ which are homotopic to a returning arc relative to
$\partial \D$.   

Let now $\D$ be a simple disk system and let $\rho$ be a
returning arc whose endpoints are contained in the boundary of some
disk $D \in \D$. 
Then $\partial D \setminus \{\rho(0),\rho(1)\}$ is the union of
two (open) intervals $\gamma_1$ and $\gamma_2$. Put $\alpha_i =
\gamma_i\union\rho$. Up to isotopy, $\alpha_1$ and $\alpha_2$ are
simple closed curves in $\partial V$ 
which are disjoint from $\D$ (compare
\cite{St02} for this construction). Therefore
both $\alpha_1$ and $\alpha_2$ bound disks in the handlebody which we
denote by $Q_1$ and $Q_2$. We say that $Q_1$ and $Q_2$ are obtained
from $D$ by \emph{simple surgery along the returning arc $\rho$}.

The following observation is well known (compare \cite{HH11}, 
\cite[Lemma~3.2]{M86}, or \cite{St02}). 
\begin{lemma}\label{lemma:unique-reduced-exchange}
If $\Sigma$ is a reduced disk system and $\rho$ is a returning arc
with endpoints on $D \in \Sigma$, then for exactly one choice of the disks
$Q_1,Q_2$ defined as above, say the disk $Q_1$, the disk system
obtained from $\Sigma$ by replacing $D$ by $Q_1$ is reduced.
\end{lemma}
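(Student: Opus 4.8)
The plan is to cut $V$ open along the remaining disks of $\Sigma$ and reduce the statement to a fact about curves on a torus. Set $W = V\setminus(\Sigma\setminus\{D\})$. Since $\Sigma$ is reduced it consists of exactly $g$ disks, so $W$ is connected; and since $W\setminus D = V\setminus\Sigma$ is a single ball, $W$ is a solid torus (a genus-$1$ handlebody, with some marked disks on its boundary coming from $\partial(\Sigma\setminus\{D\})$), and $D$ is a meridian disk of $W$. In particular $\partial D$ is a non-separating simple closed curve on the torus $\partial W$. Using incompressibility of disks and the usual innermost-disk surgery on $Q_i\cap(\Sigma\setminus\{D\})$ (whose components are circles, since the relevant boundaries are disjoint), one may isotope $Q_1$ and $Q_2$ to lie in $W$. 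Then for $i=1,2$ the system $\Sigma_i:=(\Sigma\setminus\{D\})\cup\{Q_i\}$ is reduced if and only if $V\setminus\Sigma_i = W\setminus Q_i$ is connected and simply connected, i.e.\ if and only if $Q_i$ is a meridian disk of the solid torus $W$.

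Next I would invoke the classification of properly embedded disks in a solid torus: such a disk is either a meridian (non-separating, and its boundary is not nullhomotopic in the boundary torus), or a trivial disk cutting off a ball (and then its boundary is nullhomotopic). Hence $\Sigma_i$ is reduced precisely when $\alpha_i=\partial Q_i$ is not nullhomotopic in $\partial W$. Now $\alpha_1$ and $\alpha_2$ are disjoint from the non-separating curve $\partial D$, and a torus contains no two disjoint essential curves that are not isotopic; so each $\alpha_i$ is either nullhomotopic in $\partial W$ or isotopic there to $\partial D$. Thus the lemma reduces to showing that \emph{exactly one} of $\alpha_1,\alpha_2$ is isotopic to $\partial D$ in $\partial W$.

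For this, recall that a regular neighborhood of $\partial D\cup\rho$ in $\partial V$ is an embedded pair of pants $P$ whose three boundary curves are $\partial D$, $\alpha_1$ and $\alpha_2$ (this is the surgery picture of \cite{St02}); as $P$ is disjoint from $\partial(\Sigma\setminus\{D\})$, it descends to an embedded pair of pants $\bar P\subset\partial W$ with the same three boundary curves. First, $\alpha_1,\alpha_2$ cannot both be nullhomotopic in $\partial W$: capping off $\bar P$ along $\alpha_1$ and $\alpha_2$ with the disks they bound would exhibit $\partial D$ as bounding a disk in $\partial W$, contradicting that $\partial D$ is non-separating. Second, they cannot both be essential in $\partial W$: the complement $\partial W\setminus\mathrm{int}(\bar P)$ has Euler characteristic $\chi(\partial W)-\chi(\bar P)=1>0$, hence has a component of positive Euler characteristic, necessarily a disk, whose boundary is one of $\partial D,\alpha_1,\alpha_2$; it is not $\partial D$ since $\partial D$ is non-separating, so one of $\alpha_1,\alpha_2$ bounds a disk in $\partial W$ and is nullhomotopic. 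Combining the two, exactly one of $\alpha_1,\alpha_2$ is isotopic to $\partial D$ in $\partial W$; denoting by $Q_1$ the disk bounded by that curve completes the proof.

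I expect the main obstacle to be precisely this last dichotomy for the pair of pants embedded in a torus, together with being careful about the degenerate possibilities hidden in the reductions above --- for instance that $\alpha_i$ might already be nullhomotopic or boundary-parallel in $\partial V$, or that $Q_i$ might coincide up to isotopy with one of the disks $D_j\in\Sigma$ ($j\neq$ the index of $D$) --- and checking in each case that ``$Q_i$ is not a meridian of $W$'' is equivalent to ``$\Sigma_i$ is not reduced''. Each of these is routine once the cut-open picture and the solid-torus classification are in hand, but they should be spelled out.
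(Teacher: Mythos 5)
Your argument is correct, but it cannot be compared line by line with the paper, because the paper does not actually prove Lemma~\ref{lemma:unique-reduced-exchange}: the statement is quoted as well known, with references to \cite{HH11}, \cite[Lemma~3.2]{M86} and \cite{St02}, and the only hint given afterwards is the characterization of $Q_1$ as the surgered disk for which the two spots corresponding to the two copies of $D$ lie in distinct components of the complement of $\Sigma\cup Q_1$. Your route --- cut $V$ along $\Sigma\setminus\{D\}$ to obtain a spotted solid torus $W$ in which $D$ is a meridian, observe that $(\Sigma\setminus D)\cup Q_i$ is reduced exactly when $Q_i$ is a meridian disk of $W$, and then run the pair-of-pants/Euler-characteristic dichotomy on the torus $\partial W$ to conclude that exactly one of $\alpha_1,\alpha_2$ is essential there --- is a self-contained elementary proof, and it is equivalent to the paper's characterization: $Q_i$ is a meridian of $W$ precisely when it separates the two copies of $D$ in the cut-open ball $V\setminus\Sigma$. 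Two points you defer should indeed be spelled out, but both are routine, as you predict. For the ``if'' direction of your biconditional one needs that $\alpha_i$ is essential in $\partial V$ (a disk in $\partial V$ bounded by $\alpha_i$ either misses $\partial\Sigma$, in which case $\rho$ could be homotoped into $\partial D$ contradicting essentiality of the returning arc, or contains a component of $\partial\Sigma$, which would then be inessential) and that $Q_i$ is not isotopic to a disk of $\Sigma\setminus D$ (if it were, the two parallel disks would cobound a product region forming a second complementary component, contradicting that $W\setminus Q_i$ is a single ball). In the capping-off step one should also note that a disk in $\partial W$ bounded by $\alpha_1$ or $\alpha_2$ might a priori contain the pair of pants $\bar P$, but then it contains $\partial D$ and the contradiction is immediate (alternatively, argue with $H_1(\partial W)$: if both $\alpha_i$ were null-homologous, so would be $\partial D$, which is non-separating). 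Neither point is a genuine gap; the proof stands.
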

The disk $Q_1$ is characterized by the requirement
that the two spots in the boundary of $V\setminus\Sigma$ 
corresponding to the two copies of 
$D$ are contained in distinct connected components of
$H\setminus(\Sigma\cup Q_1)$. 
It only depends on $\Sigma$ and the
returning arc $\rho$.
We call the interval $\gamma_1$ used in the construction of the disk
$Q_1$ the \emph{preferred interval defined by the returning arc}.

\begin{defi}\label{diskmove}
Let $\Sigma$ be a reduced disk system. 
A \emph{disk exchange move} is the replacement
of a disk $D\in \Sigma$ by a disk $D^\prime$
which is disjoint from $\Sigma$ and 
such that $(\Sigma\setminus D)\cup D^\prime$ is a reduced
disk system. If $D^\prime$ is 
determined as in Lemma~\ref{lemma:unique-reduced-exchange} by a
returning arc of a disk in a disk system $\D$ then
the modification
is called a \emph{disk exchange move of $\Sigma$
in direction of $\D$} or simply a \emph{directed
disk exchange move}.

A sequence $(\Sigma_i)$ of reduced disk systems is called a \emph{disk
exchange sequence in direction of $\D$} (or \emph{directed disk
exchange sequence}) if each $\Sigma_{i+1}$ is obtained from $\Sigma_i$
by a disk exchange move in direction of $\D$.
\end{defi}

The following lemma is an easy consequence of the fact that simple
surgery reduces the geometric intersection number (see \cite{HH11} for
a proof).
\begin{lemma}\label{lemma:surgery-sequences}
  Let $\Sigma_1$ be a reduced disk system and let $\D$ be any other
  disk system.  
  Then there is a disk exchange sequence $\Sigma_1,\ldots,\Sigma_n$ in
  direction of $\D$ such that $\Sigma_n$ is disjoint from $\D$.
\end{lemma}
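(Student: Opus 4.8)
We prove the stronger bookkeeping statement that the geometric intersection number
$m(\Sigma) = i(\partial\Sigma,\partial\D)$ of the boundary multicurves can be forced to strictly
decrease at each step. The plan is to run a greedy directed surgery process controlled by $m$, and
then observe that it must terminate.

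First, the terminal case. Suppose we have reached a reduced disk system $\Sigma_i$ with
$m(\Sigma_i)=0$, i.e.\ the multicurves $\partial\Sigma_i$ and $\partial\D$ are disjoint. By our
standing convention $\Sigma_i$ and $\D$ are in minimal position, so every component of
$\Sigma_i\cap\D$ is an embedded arc with endpoints on $\partial\Sigma_i\cap\partial\D=\emptyset$;
hence $\Sigma_i\cap\D=\emptyset$ and we may take $n=i$.

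Now suppose $m(\Sigma_i)>0$. Then some disk $E\in\D$ is not disjoint from $\Sigma_i$, so
$E\setminus\Sigma_i$ has an outermost component $E'$, bounded by an outermost arc $\rho$ of
$\partial E$ relative to $\Sigma_i$ together with an arc $\beta$ lying in the interior of a disk
$D\in\Sigma_i$. In particular $\rho$ is a returning arc relative to $\Sigma_i$ with both endpoints on
$\partial D$. Since $\Sigma_i$ is reduced, hence simple, simple surgery of $D$ along $\rho$ produces
two disks $Q_1,Q_2$, each disjoint from $\Sigma_i$, and by
Lemma~\ref{lemma:unique-reduced-exchange} exactly one of them, say $Q_1$, has the property that
$\Sigma_{i+1}:=(\Sigma_i\setminus D)\cup Q_1$ is again a reduced disk system. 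As $\rho$ is a subarc
of the boundary of the disk $E\in\D$, passing from $\Sigma_i$ to $\Sigma_{i+1}$ is a disk exchange
move in direction of $\D$ in the sense of Definition~\ref{diskmove}.

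It remains to verify the key inequality $m(\Sigma_{i+1})<m(\Sigma_i)$; this is precisely the
assertion that simple surgery along an outermost arc reduces the geometric intersection number with
$\partial\D$ (proved in \cite{HH11}), and the mechanism is as follows. Write
$\partial D\setminus\{\rho(0),\rho(1)\}=\gamma_1\cup\gamma_2$; up to isotopy $\partial Q_1$ is the
simple closed curve obtained from the arc $\gamma_1$ of $\partial D$ by closing it up with a small
push-off $\rho'$ of $\rho$, and by construction $\partial Q_1$ is disjoint from $\partial D$. Since
$\rho$ lies on $\partial E$ and the disks of $\D$ have pairwise disjoint boundaries, $\rho'$ crosses
no component of $\partial\D$ other than (possibly) $\partial E$, and the only contribution of the
$\rho'$ part to $i(\partial Q_1,\partial E)$ is removable at the corners; thus
$i(\partial Q_1,\partial F)\leq i(\partial D,\partial F)$ for every component $\partial F$ of
$\partial\D$, and for $\partial F=\partial E$ this can be sharpened: the points $\rho(0),\rho(1)$ are
two points of $\partial D\cap\partial E$ that do not lie on $\gamma_1$, whence
$i(\partial Q_1,\partial E)\leq i(\partial D,\partial E)-1$. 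Summing over components and using that
the disks of $\Sigma_i$ other than $D$ are untouched gives
$m(\Sigma_{i+1})=m(\Sigma_i)-i(\partial D,\partial\D)+i(\partial Q_1,\partial\D)<m(\Sigma_i)$.
Iterating the surgery step, the process therefore terminates after finitely many directed disk
exchange moves at a reduced disk system $\Sigma_n$ with $m(\Sigma_n)=0$, which by the terminal case
is disjoint from $\D$.

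The one genuinely nontrivial point is the last inequality: the precise bookkeeping of how the
boundary intersection pattern transforms under simple surgery, and in particular the use of the fact
that the outermost arc $\rho$ sits on a single component $\partial E$ of the pairwise disjoint system
$\partial\D$, so that its push-off $\rho'$ contributes nothing outside $\partial E$ while its two
endpoints account for two intersection points of $\partial D$ with $\partial E$ that are lost in
passing to $Q_1$. This is exactly the input borrowed from \cite{HH11}; the remaining steps are
formal.
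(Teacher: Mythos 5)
Your proposal is correct and follows the same route as the paper, which disposes of this lemma by remarking that it is an easy consequence of the fact that simple surgery along an outermost (returning) arc strictly reduces the geometric intersection number with $\partial\D$ (citing \cite{HH11}); you have simply written out that intersection-count bookkeeping and the resulting termination argument in detail. The only mildly delicate point, that the push-off of $\rho$ contributes no crossings at the corners so that both surgered disks lose the two endpoint intersections with $\partial E$, is handled correctly (and in fact yields a decrease by two rather than one), so the induction on $i(\partial\Sigma,\partial\D)$ goes through as you state.
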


To estimate the growth rate of the Dehn function of the handlebody
group we will need to compare disk exchange sequences starting in
disjoint reduced disk systems. 
This is made possible by considering another type of surgery sequence for disk
systems, which we describe in the remainder of this section.

To this end, let $\D$ be any simple disk system and let $\rho$ 
be a returning arc. A \emph{full disk
replacement defined by $\rho$} modifies a simple disk system $\D$ to a
simple disk system $\D^\prime$ as follows. 
Let $D \in \D$ be the disk containing the endpoints of the returning
arc $\rho$.
Replace $D$ by both disks
$Q_1,Q_2$ obtained from $D$ by the simple surgery defined by $\rho$. 
The disks $Q_1,Q_2$ are disjoint from each other and from $\D$. If 
one (or both) of these disks is isotopic to a disk $Q$ from
$\D\setminus D$
then this disk will be discarded (i.e. we retain a single copy of $Q$;
compare \cite{Ha95} for a similar
construction). We say that a sequence $(\D_i)$ is a \emph{full disk
  replacement sequence in direction of $\D$} (or \emph{directed full
  disk replacement sequence}) if each $\D_{i+1}$ is obtained from
$\D_i$ by a full disk replacement along a returning arc contained in
$\partial \D$.

The following two lemmas relate full disk replacement sequences to disk
exchange sequences. Informally, these lemmas state that every directed
disk exchange sequence may be extended to a full disk replacement
sequence, and conversely every full disk replacement sequence contains a disk
exchange sequence. To make this idea precise, we use the following
\begin{defi}
  Let $\D$ be an arbitrary disk system.
  Suppose that $\D_0,\ldots,\D_n$ is a full disk replacement sequence in
  direction of $\D$ and that $\Sigma_1,\ldots,\Sigma_k$ is a disk
  exchange sequence in direction of $\D$.

  We say that the sequences $(\D_i)$ and $(\Sigma_i)$ are
  \emph{compatible}, if there is a non-decreasing surjective map
  $r:\{0,\dots,n\}\to \{1,\dots,k\}$ such that
  $\Sigma_{r(i)}\subset \D_{i}$ for all $i$.
\end{defi}
\begin{lemma}\label{lemma:fulldisk}
Let $\Sigma$ be a reduced disk system, let $\D$ 
be a simple disk system containing $\Sigma$ and let 
$\D=\D_0, \D_1,\dots,\D_m$ be
a full disk replacement sequence in direction of a disk system $\D'$. 
Then there is a disk exchange sequence $\Sigma=\Sigma_0,
\Sigma_1,\dots,\Sigma_u$ in direction of $\D'$ which is compatible
with $(D_i)$.
\end{lemma}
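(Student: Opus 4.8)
The plan is to build the disk exchange sequence $(\Sigma_i)$ inductively, one full disk replacement at a time, together with the surjection $r$. I set $\Sigma_0 = \Sigma$, which is a reduced disk system contained in $\D_0 = \D$ by hypothesis. Suppose inductively that we have produced a disk exchange sequence $\Sigma_0, \dots, \Sigma_{k_j}$ in direction of $\D'$ and a non-decreasing surjective map $r_j : \{0, \dots, j\} \to \{0, \dots, k_j\}$ with $\Sigma_{r_j(i)} \subset \D_i$ for all $i \leq j$; in particular $\Sigma_{k_j} \subset \D_j$. We must pass from $\D_j$ to $\D_{j+1}$.

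The passage from $\D_j$ to $\D_{j+1}$ is a full disk replacement along a returning arc $\rho$ contained in $\partial\D_j$; let $D \in \D_j$ be the disk containing the endpoints of $\rho$, and let $Q_1, Q_2$ be the disks obtained from $D$ by simple surgery along $\rho$, so that $\D_{j+1}$ is obtained from $\D_j$ by replacing $D$ with (the non-redundant ones among) $Q_1, Q_2$. There are two cases. If the disk $D$ does not lie in $\Sigma_{k_j}$, then $\Sigma_{k_j}$ is already disjoint from $D$ and hence contained in $\D_{j+1}$ as well (since all the other disks of $\D_j$ survive, possibly with $Q_1$ or $Q_2$ adjoined); we then set $k_{j+1} = k_j$ and extend $r$ by $r_{j+1}(j+1) = k_j$, and the defining property is preserved. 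If instead $D \in \Sigma_{k_j}$, then $\rho$ is a returning arc with endpoints on a disk of the reduced disk system $\Sigma_{k_j}$, so by Lemma~\ref{lemma:unique-reduced-exchange} exactly one of $Q_1, Q_2$ — say $Q_1$ — has the property that $(\Sigma_{k_j}\setminus D)\cup Q_1$ is reduced. This is precisely a disk exchange move in direction of $\D_j$; but since $\rho \subset \partial \D_j$ and the move replaces $D$ by $Q_1$, which is disjoint from $\D_j$, we may equally regard it as being in direction of $\D'$ only if we are careful about what "direction of $\D'$" means here. The key point to check is that this move is legitimate as a step of a disk exchange sequence in direction of $\D'$: by Definition~\ref{diskmove} a directed disk exchange move in direction of $\D'$ is determined by a returning arc of a disk in \emph{some} disk system — here $\D_j$ — and the compatibility with the full replacement sequence $(\D_i)$ in direction of $\D'$ is exactly what records that the arc $\rho$ we used is the one coming from $\D_j$. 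So we set $\Sigma_{k_j + 1} = (\Sigma_{k_j}\setminus D)\cup Q_1$, put $k_{j+1} = k_j + 1$, and extend $r$ by $r_{j+1}(j+1) = k_j + 1$; then $\Sigma_{k_{j+1}} \subset \D_{j+1}$ because $Q_1 \in \{Q_1, Q_2\} \subseteq \D_{j+1} \cup \D_j$ and in fact $Q_1$ is one of the disks adjoined to form $\D_{j+1}$ (it is not discarded, since a disk that is discarded is isotopic to an already-present disk of $\D_j$, and then $(\Sigma_{k_j}\setminus D)\cup Q_1$ would have two homotopic disks, contradicting that it is a reduced — hence honest — disk system), while $\Sigma_{k_j}\setminus D \subset \D_j \setminus D \subset \D_{j+1}$.

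Iterating over $j = 0, \dots, m-1$ produces the sequence $\Sigma_0, \dots, \Sigma_u$ with $u = k_m$ and a non-decreasing map $r = r_m : \{0,\dots,m\}\to\{0,\dots,u\}$; surjectivity holds because at each step $k_{j+1} - k_j \in \{0,1\}$ and we always set $r(j+1) = k_{j+1}$, so every value in $\{0,\dots,u\}$ is attained. Reindexing the target to $\{1,\dots,k\}$ as in the statement of the definition of compatibility is cosmetic. The main obstacle is the bookkeeping in the second case: one must verify that the single disk $Q_1$ selected by Lemma~\ref{lemma:unique-reduced-exchange} relative to $\Sigma_{k_j}$ is never the copy discarded in the full replacement forming $\D_{j+1}$, and that $\Sigma_{k_j}\setminus D$ genuinely persists inside $\D_{j+1}$; both follow from the fact that discarding in a full replacement only removes a disk isotopic to one already present, combined with reducedness (hence embeddedness with no parallel components) of the $\Sigma_i$. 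One should also note that when $D \in \Sigma_{k_j}$ it is automatic that $D \in \D_j \setminus (\text{disks not meeting }\rho)$ is the disk being surgered, so the exchange move and the replacement move act on the same disk, which is what makes the inclusion $\Sigma_{k_j+1}\subset \D_{j+1}$ work on the nose.
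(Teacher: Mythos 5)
Your construction is essentially the paper's own proof: proceed step by step along the full disk replacement sequence, and at each step either the surgered disk $D$ is not in the current reduced system (keep it and extend $r$) or it is (apply Lemma~\ref{lemma:unique-reduced-exchange} and replace $D$ by the distinguished surgery disk, which lies in the next simple system). The construction is correct, but the justification of the one point you yourself single out as the key issue --- that each exchange move is a move \emph{in direction of $\D'$} --- is off. You assert that the full disk replacement from $\D_j$ to $\D_{j+1}$ is along a returning arc $\rho$ contained in $\partial\D_j$, and then argue that Definition~\ref{diskmove} only asks for a returning arc of a disk in \emph{some} disk system, ``here $\D_j$''. Neither reading is right: by the definition of a directed full disk replacement sequence, $\rho$ is a returning arc \emph{relative to} $\D_j$ that is \emph{contained in} $\partial\D'$, and Definition~\ref{diskmove} names the direction after the disk system whose disk contributes the arc, so an arc borrowed from $\D_j$ would only yield a move in direction of $\D_j$, which is not what the lemma claims. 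The correct (and simpler) argument is immediate from the right reading: since $\rho\subset\partial\D'$ has both endpoints on $\partial D$ with $D\in\Sigma_{k_j}$ and its interior is disjoint from $\partial\D_j\supset\partial\Sigma_{k_j}$, it is a returning arc relative to $\Sigma_{k_j}$ coming from a disk of $\D'$, hence the exchange move is in direction of $\D'$ exactly as required.

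Two smaller remarks. The ``discarding'' issue you treat as the main obstacle is a non-issue: disk systems are considered up to isotopy, so if $Q_1$ happens to be isotopic to some $Q\in\D_j\setminus\{D\}$, then $Q_1$ still lies in $\D_{j+1}$ up to isotopy and the inclusion $\Sigma_{k_j+1}\subset\D_{j+1}$ holds regardless. Note also that your contradiction argument there does not quite work as stated: the disk $Q$ of $\D_j$ isotopic to $Q_1$ need not belong to $\Sigma_{k_j}$, so $(\Sigma_{k_j}\setminus D)\cup Q_1$ need not contain two homotopic disks; fortunately no such argument is needed.
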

\begin{proof} 
We proceed by induction on the
length of the full disk replacement sequence $(\D_i)$.
If this length equals zero there is nothing to show. 
Assume that the claim holds true whenever this length
does not exceed $m-1$ for some $m>0$. 

Let $\D_0,\dots,\D_m$ be a full disk replacement
sequence of length $m$ and 
let $\Sigma\subset \D_0$ be a reduced disk system.
Let $D\in \D_0$ be the disk replaced in the full
disk replacement move connecting $\D_0$ to $\D_1$.

If $D\in \Sigma$ then for one of the two disks obtained 
from $D$ by simple surgery, say the disk $D^\prime$, the
disk system $\Sigma_1=(\Sigma\setminus D)\cup D^\prime$ is reduced.  
However, $\Sigma_1\subset \D_1$ 
and the claim now follows from the induction hypothesis.

If $D\not\in \Sigma$ then $\Sigma\subset \D_1$ by 
definition and once again, the claim follows from
the induction hypothesis.
\end{proof}

\begin{lemma}\label{typical}
Let $\Sigma_0,\dots,\Sigma_m$ be a disk exchange
sequence of reduced disk systems in direction of a disk system $\D'$. Then for
every simple disk system $\D_0\supset \Sigma_0$
there is a full disk replacement sequence 
$\D_0,\dots,\D_k$ in direction of $\D'$ which is compatible with $(\Sigma_i)$.
\end{lemma}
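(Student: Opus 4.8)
The plan is to induct on the length $m$ of the disk exchange sequence, in parallel with the proof of Lemma~\ref{lemma:fulldisk}; recall that a full disk replacement turns a simple disk system into a simple disk system, so every system produced below is automatically simple, and we keep all disks and disk systems in minimal position throughout. If $m=0$ the one-term sequence $(\D_0)$ is compatible with $(\Sigma_0)$, so assume $m\ge1$ and that the lemma holds for shorter disk exchange sequences. The first move $\Sigma_0\to\Sigma_1$ replaces a disk $D\in\Sigma_0$ by the disk $D^*$ of Lemma~\ref{lemma:unique-reduced-exchange}, where $D^*$ arises from $D$ by simple surgery along a returning arc $\rho$ relative to $\Sigma_0$ with endpoints on $D$, and $\rho$ is a subarc of $\partial E$ for some $E\in\D'$; thus $\partial D^*=\gamma_1\cup\rho$ with $\gamma_1\subset\partial D$, and $D^*$ may be isotoped to be disjoint from $\Sigma_0$. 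Everything reduces to the following claim: \emph{for every simple disk system $\D_0\supset\Sigma_0$ there is a full disk replacement sequence $\D_0=\D^{(0)},\dots,\D^{(p)}$ with $p\ge1$, in direction of $\D'$, such that $\Sigma_0\subset\D^{(l)}$ for $0\le l\le p-1$ and $\Sigma_1\subset\D^{(p)}$.} Granting it, the induction hypothesis applied to $\Sigma_1,\dots,\Sigma_m$ and $\D^{(p)}\supset\Sigma_1$ yields a full disk replacement sequence $\D^{(p)}=\hat\D_0,\dots,\hat\D_q$ in direction of $\D'$ compatible with $(\Sigma_1,\dots,\Sigma_m)$ via a non-decreasing surjection $\hat r$ with $\hat r(0)=1$. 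Concatenating along $\D^{(p)}=\hat\D_0$ and declaring $r$ to be $0$ on the indices of $\D^{(0)},\dots,\D^{(p-1)}$ and to agree with $\hat r$ afterwards gives a map that is non-decreasing (it jumps from $0$ to $1$ exactly at $\D^{(p)}$), surjective onto $\{0,\dots,m\}$, and satisfies $\Sigma_{r(l)}\subset\D_l$ for all $l$ by the claim and the induction hypothesis; hence the concatenated sequence is compatible with $(\Sigma_0,\dots,\Sigma_m)$.

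To prove the claim I would run a second induction, on the number $c$ of points where the interior of $\rho$ meets $\partial(\D_0\setminus\Sigma_0)$. As $\rho$ is a returning arc relative to $\Sigma_0$, its interior misses $\partial\Sigma_0$, so $c$ also counts the interior intersections of $\rho$ with $\partial\D_0$, and (since $\gamma_1\subset\partial D$ is disjoint from every disk of $\D_0$ other than $D$) the interior intersections of $\partial D^*$ with $\partial\D_0$. If $c=0$, then $\rho$ is a returning arc relative to $\D_0$ based on $D$ and contained in $\partial\D'$; the full disk replacement of $D$ along $\rho$ produces $\D^{(1)}$ containing both surgery disks, one of which is $D^*$, so $\Sigma_1=(\Sigma_0\setminus D)\cup D^*\subset\D^{(1)}$ and $p=1$ works. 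If $c>0$, cut the disk $D^*$ along $\D_0\setminus\Sigma_0$: the resulting arcs are properly embedded in $D^*$, their endpoints cannot lie on $\gamma_1$ and hence lie in the interior of $\rho$, and there is at least one of them. Choose one, $a$, that together with a subarc $\rho''$ of $\rho$ bounds a subdisk $H\subset D^*$ containing no other of these arcs. Then $a\subset D''$ for some disk $D''$ of $\D_0$, and $D''\notin\Sigma_0$ because the endpoints of $a$ lie in the interior of $\rho$, which is disjoint from $\partial\Sigma_0$. Since $H$ is attached to $D''$ along $a$ from one side, the two endpoints of $\rho''$ lie on the same side of $\partial D''$; so $\rho''$ is a returning arc relative to $\D_0$ based on $D''$ and contained in $\partial E\subset\partial\D'$. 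The full disk replacement of $D''$ along $\rho''$ gives a simple disk system $\D^{(1)}$ still containing $\Sigma_0$ (as $D''\notin\Sigma_0$), and since $\rho$ and $\rho''$ both lie on the single circle $\partial E$ this surgery creates no new intersections of $\rho$ with the system, while the two endpoints of $\rho''$ are no longer intersection points of $\rho$ with the boundary of the modified system. Thus $c$ decreases by at least $2$, the second induction applies to $\D^{(1)}\supset\Sigma_0$, and prepending $\D^{(0)}=\D_0$ gives the sequence required by the claim.

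The main difficulty is this claim, specifically the choice of preparatory full disk replacement when $c>0$. The naive attempt --- to use an outermost arc of $\D'$ relative to the whole system $\D_0$ --- can fail, because every outermost arc of $\D'$ relative to $\D_0$ may happen to have both endpoints on $\Sigma_0$, so using one would destroy $\Sigma_0$. The point is instead to cut the particular disk $D^*$ by $\D_0\setminus\Sigma_0$: because the part $\gamma_1$ of $\partial D^*$ lying on $\partial D$ is automatically disjoint from $\D_0\setminus\Sigma_0$, every innermost intersection arc yields a returning arc based on a disk disjoint from $\Sigma_0$, whose full disk replacement therefore leaves $\Sigma_0$ untouched. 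The remaining ingredient --- that this particular surgery strictly decreases the count $c$, which is what sustains the second induction --- is a routine innermost-disk/bigon argument and is the only genuinely computational step.
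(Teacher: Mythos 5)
Your proposal is correct and follows essentially the same scheme as the paper's proof: induct on the length $m$, make preliminary full disk replacements along returning subarcs of $\rho$ based at disks of $\D_0\setminus\Sigma_0$ (so $\Sigma_0$ persists and one sets $r=0$ there), then perform the replacement along $\rho$ itself to capture $\Sigma_1$ and invoke the induction hypothesis. The only deviation is cosmetic: you produce the auxiliary returning arc as the boundary of an outermost piece of the surgered disk $D^*$ cut along $\D_0\setminus\Sigma_0$, while the paper takes a returning component of $\rho\setminus(\D_0\setminus D)$ directly; both choices avoid touching $\Sigma_0$, strictly decrease the intersection count, and terminate in the same way.
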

\begin{proof} We proceed by induction on the length $m$ of the
  directed disk exchange sequence.

The case $m=0$ is trivial, so assume that the lemma holds true
for directed disk exchange sequences of length at most $m-1$ for some $m\geq 1$.
Let $\Sigma_0,\dots,\Sigma_m$ be a directed disk exchange sequence of length
$m$.
Suppose $\Sigma_1$ is obtained from $\Sigma_0$ by replacing a disk
$D\in\Sigma_0$. Let $\rho$ be the returning arc with endpoints on $D$
defining the disk replacement, and let $D_1$ be the disk in $\Sigma_1$
which is the result of the simple surgery.

We distinguish two cases.
In the first case,
$\rho \cap \D_0 = \rho \cap D$.
Then $\rho$ is a returning arc relative to $\D_0$.
Let $\D_1$ be the disk system obtained from $\D_0$ by 
the full disk replacement defined by $\rho$.
One of the two disks obtained by simple surgery along $\rho$ 
is the disk $D_1$ and hence $D_1\in \D_1$.
The claim now follows from the induction hypothesis,
applied to the disk exchange sequence $\Sigma_1,\dots,\Sigma_m$ of
length $m-1$ and the simple disk system $\D_1$ containing $\Sigma_1$.

In the second case, the returning arc $\rho$ intersects $\D_0\setminus
D$. Then $\rho \setminus (\D_0\setminus D)$ contains a component $\rho'$ which
is a returning arc with endpoints on a disk $Q\in\D_0\setminus\{D\}$.
A replacement of the disk $Q$ by both disks obtained from
$Q$ by simple surgery using the returning arc $\rho'$
reduces the number of intersection 
components of $\rho \cap (\D_0\setminus D)$. Moreover,
the resulting disk system contains $D$. 
In finitely many surgery steps, say $s\geq 1$ steps, we
obtain a simple disk system $\D_s$ with the following properties.
\begin{enumerate}
\item $\D_s$ contains $D$ and is obtained from $\D_0$ by a 
full disk replacement sequence. 
\item $\rho \cap (\D_s-D)=\emptyset$.
\end{enumerate} 
Define $r(i)=0$ for $i=0,\dots,s$, where $r$ is the function required
in the definition of compatibility.
We now can use the procedure 
from the first case above, applied
to $\Sigma_0,\D_s$ and $\rho$ to 
carry out the induction step.

This completes the proof of the lemma.
\end{proof}

\section{The graph of rigid racks}
\label{sec:racks}
The goal of this section is to describe a construction of paths in the
handlebody group whose geometry is easy to control.
A version of these paths was already used in \cite{HH11} to establish
an upper bound for the distortion of the handlebody group in the
mapping class group. 

The main objects are given by the following
\begin{defi}\label{rack}
A \emph{rack} $R$ in $V$ is given by a reduced disk 
system $\Sigma(R)$, called the \emph{support system} of the rack $R$,
and a collection of pairwise disjoint essential
embedded arcs in $\partial V\setminus\partial\Sigma(R)$ 
with endpoints on $\partial\Sigma(R)$,
called \emph{ropes}, which are pairwise non-homotopic 
relative to $\partial \Sigma(R)$.
At each side of a support disk $D\in \Sigma(R)$, 
there is at least one rope which 
ends at the disk and approaches the disk from this side. A rack is
called \emph{large} if the set of ropes decomposes  $\partial
V\setminus\partial\Sigma(R)$ into simply connected regions.
\end{defi}

We will consider racks up to an equivalence relation called
``rigid isotopy'' which is defined as follows. 
\begin{defi}
  \begin{enumerate}[i)]
  \item Let $R$ be a large rack. The union of the support system 
    and the system of ropes of $R$ defines the $1$--skeleton of a cell
    decomposition of the surface $\partial V$ which we call the \emph{cell
      decomposition induced by $R$}.
  \item Let $R$ and $R'$ be racks. We say that $R$ and $R'$ are
    \emph{rigidly isotopic} if 
    there is an isotopy of $\partial V$ which maps the support system
    of $R$ to the support system of $R^\prime$ and defines an isotopy
   of the cell decompositions induced by $R$
    and $R'$.
  \end{enumerate}
\end{defi}
In particular, if $T$ is a simple Dehn twist about the boundary
of a support disk of a rack $R$, then $R$ and $T^n(R)$ are not
rigidly isotopic for $n\geq 2$. This observation and the fact that the
stabilizer in the mapping class group of a reduced disk system is
contained in the handlebody group imply the following
\begin{lemma}\label{lemma:marked-finite-quotient}
  The handlebody group acts on the set of rigid isotopy classes of
  racks with finite quotient and finite point stabilizers.
\end{lemma}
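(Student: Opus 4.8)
\medskip

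The plan is to transfer the problem from $\Map(V)$ to the mapping class group $\Map(\del V)$ of the boundary surface, in which a rack up to rigid isotopy is encoded by a cell decomposition of $\del V$ of uniformly bounded complexity, together with a marking of those edges that form the support system; the two assertions then reduce to standard finiteness statements for the action of a surface mapping class group on its cell decompositions. (We take a rack to be large throughout, since only then is the induced cell decomposition, and hence rigid isotopy, defined.) The transfer itself rests on Lemma~\ref{lemma:han-action-on-disk-sys}. Suppose $R,R'$ are racks and $\varphi\in\Map(\del V)$ carries $R$ to a rack rigidly isotopic to $R'$. A rigid isotopy carries support system to support system, so $\varphi([\Sigma(R)])=[\Sigma(R')]$. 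Choosing $\psi\in\Map(V)$ with $\psi([\Sigma(R)])=[\Sigma(R')]$ by the first assertion of Lemma~\ref{lemma:han-action-on-disk-sys}, the class $\psi^{-1}\varphi$ fixes the isotopy class of the simple disk system $\Sigma(R)$, hence lies in $\Map(V)$ by the second assertion; therefore $\varphi=\psi(\psi^{-1}\varphi)\in\Map(V)$. Consequently the orbits and the point stabilizers of the $\Map(V)$-- and $\Map(\del V)$--actions on rigid isotopy classes of racks coincide, so it suffices to prove both statements for $\Map(\del V)$.

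For the finite quotient, observe first that the ropes of a rack are pairwise disjoint, pairwise non-isotopic essential arcs in the surface $S$ obtained from $\del V$ by cutting along the support multicurve. Since the complement of a reduced disk system in $V$ is a ball with $2g$ spots, $S$ is a sphere with $2g$ holes; hence a rack has only boundedly many ropes (the bound depending on $g$ alone), and its induced cell decomposition has boundedly many cells. Assigning to a large rack its edge-marked induced cell decomposition gives a $\Map(\del V)$--equivariant injection from the set of rigid isotopy classes of racks into the set of isotopy classes of marked cell decompositions of $\del V$ with boundedly many cells. Only finitely many combinatorial (marked ribbon graph) types of the latter occur, and $\Map(\del V)$ acts transitively on the set of isotopy classes realising a fixed type; hence there are finitely many orbits.

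For finite point stabilizers, fix a large rack $R$ with edge-marked induced cell decomposition $C$ and set $G=\mathrm{Stab}_{\Map(\del V)}([R])$. Each element of $G$ preserves $C$ up to isotopy, hence is represented by a homeomorphism carrying $C$ onto itself, which therefore permutes the finitely many cells of $C$; this yields a homomorphism from $G$ to the finite automorphism group of $C$. A class in its kernel is represented by a homeomorphism fixing every cell of $C$ setwise --- in particular fixing each vertex and each edge with its endpoints --- so after an isotopy it restricts to the identity on the $1$--skeleton. Since $R$ is large, every $2$--cell is a disk, and by the Alexander trick the homeomorphism is then isotopic, $2$--cell by $2$--cell, to the identity. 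Thus the kernel is trivial and $G$ is finite. (Largeness is essential here: were some complementary region of the $1$--skeleton not a disk, a Dehn twist about an essential curve in it would fix $C$ and produce an infinite stabilizer. This is exactly why one works with rigid, rather than ordinary, isotopy --- compare the observation recalled just before the lemma that $R$ and $T^n(R)$ are not rigidly isotopic for $n\ge 2$.)

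I expect the main technical point to be the bookkeeping behind the assertion that a rigid isotopy class of a large rack \emph{is} a marked cell decomposition of $\del V$ of bounded complexity: one must check that this assignment is injective on rigid isotopy classes, that a fixed combinatorial type of marked cell decomposition is realised by a single $\Map(\del V)$--orbit of isotopy classes, and that the relevant finiteness --- equivalently, that $\Map(S)$ acts on the subcomplex of filling arc systems of the arc complex of $S$ properly and cocompactly --- is genuinely available in this bounded-complexity setting. Everything else is a formal consequence of Lemma~\ref{lemma:han-action-on-disk-sys} and the Alexander trick.
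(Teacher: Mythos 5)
Your argument is correct and is essentially the one the paper has in mind: the lemma is stated there without proof, citing exactly the two ingredients you use --- Lemma~\ref{lemma:han-action-on-disk-sys} to reduce everything to the action of $\Map(\partial V)$ (stabilizers of disk systems lie in $\Map(V)$), and the rigidity of the induced cell decomposition (the Dehn twist observation) to get finite point stabilizers via the finite automorphism group of a filling cell decomposition together with the Alexander trick. Your explicit restriction to large racks and the bounded-complexity count of marked cell decompositions of the $2g$-holed sphere are the right way to make the finite-quotient statement precise, so the proposal faithfully fills in the details the paper omits.
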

For simplicity of notation, we call a 
rigid isotopy class of a large rack simply a \emph{rigid rack}. 
Lemma~\ref{lemma:marked-finite-quotient} allows us to use rigid racks
as the vertex set of a $\Map(V)$--graph. More precisely, we make the following
\begin{defi}
  The \emph{graph of rigid racks} $\mathcal{RR}_K(V)$ is the graph whose
  vertex set is the set of rigid racks. Two such
  vertices are joined by an edge if up to isotopy, 
  the $1$--skeleta of the cell decompositions induced by the
  racks intersect in at most $K$ points.
\end{defi}
It follows easily from Lemma \ref{lemma:marked-finite-quotient}
that the number $K$ may be chosen in such a way that the graph
$\mathcal{RR}_K(V)$ is
connected. In Lemma~7.3 of \cite{HH11} such a number $K>0$
is constructed explicitly.
In the sequel, we will always use this choice of $K$ and suppress the
mention of $K$ from our notation.
It then follows from 
Lemma~\ref{lemma:marked-finite-quotient} 
and the Svarc-Milnor lemma that 
the graph $\mathcal{RR}(V)$ is quasi-isometric to
$\Map(V)$.  

\medskip
Next we construct a family of distinguished paths 
in the graph of rigid racks. The paths are inspired by
splitting sequences of train tracks on surfaces. To
this end, we first define a notion of ``carrying'' for racks.
\begin{defi}\label{carrying}
\begin{enumerate}
\item A disk system $\D$  
is \emph{carried} by a rigid rack $R$ if it is in minimal
position with respect to the support system 
$\Sigma(R)$ of $R$ and if each component of 
$\partial \D\setminus\partial\Sigma(R)$
is homotopic relative to $\partial \Sigma(R)$ 
to a rope of $R$.
\item An embedded essential arc $\rho$ in $\partial V$ 
with endpoints on $\partial \Sigma(R)$ 
is \emph{carried} by $R$ if each component of 
$\rho\setminus\partial\Sigma(R)$ is homotopic relative to
$\partial\Sigma(R)$ to a rope of $R$. 
\item A \emph{returning rope} of a rigid rack $R$ is a rope which
begins and ends at the same side of some fixed 
support disk $D$ (i.e. defines a returning arc relative to $\partial
\Sigma(R)$). 
\end{enumerate}
\end{defi}
Let $R$ be a rigid rack with support system $\Sigma(R)$ and let $\alpha$ be a 
returning rope of $R$ with endpoints on a support disk $D\in \Sigma(R)$.
By Lemma~\ref{lemma:unique-reduced-exchange}, for one
of the components $\gamma_1,\gamma_2$ of 
$\partial D\setminus\alpha$, say the component $\gamma_1$, 
the simple closed curve
$\alpha\cup \gamma_1$ is the 
boundary of an embedded disk $D^\prime\subset H$
with the property that 
the disk system
$(\Sigma\setminus D)\cup D^\prime$ is reduced.

A \emph{split} of the rigid rack $R$
at the returning rope $\alpha$ is any rack $R^\prime$
with support system $\Sigma^\prime=(\Sigma(R)\setminus D)\cup D^\prime$
whose ropes are given as follows.
\begin{enumerate}
\item Up to isotopy, each 
rope $\rho^\prime$ of $R^\prime$ has its endpoints in 
$(\partial\Sigma(R)\setminus\partial D)\cup
\gamma_1\subset \partial\Sigma(R)$ and  
is an arc carried by $R$. 
\item For every rope $\rho$ of $R$ there is a rope $\rho^\prime$
of $R^\prime$ such that up to isotopy, $\rho$ is a component of 
$\rho^\prime\setminus\partial\Sigma(R)$.
\end{enumerate}

The above definition implies in particular that 
a rope of $R$ which does not have an endpoint on $\partial D$ is 
also a rope of $R^\prime$. Moreover, there is a map
$\Phi:R^\prime\to R$ which maps a rope of 
$R^\prime$ to an arc carried by $R$, and which maps 
the boundary of a support disk of $R^\prime$ to a simple closed
curve $\gamma$ of the form $\gamma_1\circ \gamma_2$ where
$\gamma_1$ either is a rope of $R$ or trivial, 
and where $\gamma_2$ is
a subarc of the boundary of a support disk of $R$
(which may be the entire boundary circle).
The image of $\Phi$ contains every rope of $R$. 

\medskip
We are now ready to recall the construction of a distinguished class
of edge-paths in the graph of rigid racks from \cite{HH11}. These paths are
sufficiently well-behaved to yield 
some geometric control of the handlebody group.

For a reduced disk system $\Sigma$ let $\mathcal{RR}(V,\Sigma)$
be the complete subgraph of $\mathcal{RR}(V)$ whose vertices are marked
rigid racks with support system $\Sigma$.

\begin{defi}
  Let $\D$ be a simple disk system. A \emph{$\D$-splitting sequence} of
    racks is an edge-path $R_i$ in the graph of rigid racks with the
  following properties.
  \begin{enumerate}[i)]
  \item There is a disk exchange sequence $\Sigma_i$ in direction of $\D$
    and a sequence of numbers $1=r_1<\dots<r_k$ such that the support
    system of $R_j$ is $\Sigma_i$ for all $r_i\leq j\leq
    r_{i+1}-1$. The sequence $\Sigma_i$ is called the \emph{associated
    disk exchange sequence}.
  \item For $r_i\leq j\leq r_{i+1}-1$, the sequence $R_j$  is a uniform
    quasi-geodesic in the graph $\mathcal{RR}(V,\Sigma_i)$.
  \end{enumerate}
\end{defi}
Here and in the sequel, we say that a path is a \emph{uniform}
quasigeodesic if the quasigeodesic constants of the path depend only
on the genus of the handlebody. Similarly, we say that a number is
\emph{uniformly bounded}, if there is a bound depending only on the
genus of $V$.

We showed in \cite{HH11} that any two points in the graph of rigid
racks can be connected by a splitting sequence. More precisely,
the proof of Theorem~7.9 of \cite{HH11} yields
\begin{theorem}\label{thm:connect}
  Let $R,R'$ be two rigid racks. 
  Then there is a disk system ${\mathcal D}$ depending only on the support
  system of $R'$ with the following property.
  Let $\Sigma(R)=\Sigma_1,\Sigma_2,\ldots,\Sigma_n$ be a disk
  exchange sequence in direction of $\D$ such that $\Sigma_n$
  is disjoint from $\D$.

  Then there is a splitting sequence connecting $R$ to $R'$ whose
  associated disk exchange sequence is $(\Sigma_i)$. The length of
  such a sequence is bounded uniformly 
  exponentially in the distance between $R$ and
  $R'$ in the graph of rigid racks.
\end{theorem}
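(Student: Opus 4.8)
The plan is to build the splitting sequence connecting $R$ to $R'$ in stages, tracking simultaneously the disk exchange sequence $(\Sigma_i)$ (which is given) and a family of rigid racks supported on the $\Sigma_i$, and then to estimate lengths. First I would recall from \cite{HH11} that $R'$ is carried by a rigid rack $R''$ whose support system $\Sigma(R'')$ is disjoint from some fixed reference disk system, and that the disk system ${\mathcal D}$ in the statement is essentially $\Sigma(R'')$ (or a disk system canonically built from it); this is where the dependence ``only on the support system of $R'$'' comes from. Then, given the directed disk exchange sequence $\Sigma(R)=\Sigma_1,\ldots,\Sigma_n$ with $\Sigma_n$ disjoint from ${\mathcal D}$, the goal is to produce racks $R_j$ with the two defining properties of a $\D$-splitting sequence: the support systems run monotonically through the $\Sigma_i$, and within each block the racks form a uniform quasigeodesic in the fiber graph $\mathcal{RR}(V,\Sigma_i)$.

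The key steps, in order, are as follows. \emph{Step 1: start.} Connect $R$ to a rigid rack $R_1$ with support system $\Sigma_1$ whose ropes are adapted to the first disk exchange move; since $R$ already has support system $\Sigma_1=\Sigma(R)$, this is a short path inside $\mathcal{RR}(V,\Sigma_1)$. \emph{Step 2: the splitting moves.} For each disk exchange move $\Sigma_i\rightsquigarrow\Sigma_{i+1}$, realized by a returning arc $\rho_i$, perform a split of the current rack at the corresponding returning rope (Definition of split, using Lemma~\ref{lemma:unique-reduced-exchange} to pick the disk $D'$ that keeps the system reduced); this gives a single edge of $\mathcal{RR}(V)$ passing from support system $\Sigma_i$ to $\Sigma_{i+1}$. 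To apply a split one needs the returning arc to actually be a \emph{returning rope}, i.e.\ carried by the current rack; this is arranged by first interpolating inside the fiber $\mathcal{RR}(V,\Sigma_i)$ along a uniform quasigeodesic to a rack carrying $\rho_i$ — the paths from Theorem~7.9 of \cite{HH11}. \emph{Step 3: termination.} Once the support system is $\Sigma_n$, disjoint from ${\mathcal D}$, $R'$ (carried by $R''$, supported away from the reference system) lies in a uniformly bounded neighborhood of the fiber $\mathcal{RR}(V,\Sigma_n)$, so I finish with one more uniform quasigeodesic in that fiber to reach $R'$. \emph{Step 4: length estimate.} The number of disk exchange moves $n$ is at most exponential in $d(R,R')$ — this is exactly the content of the distortion estimates in \cite{HH11}, coming from the fact that geometric intersection numbers of disk systems are exponential in word length — and each block contributes a uniformly bounded amount times the diameter traversed in its fiber, which is again controlled exponentially by the intersection numbers involved. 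Summing the geometric-type bound over the blocks yields the stated uniform exponential bound.

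The main obstacle I expect is Step 2: verifying that one can always reroute the current rack, by a controlled path in its fiber $\mathcal{RR}(V,\Sigma_i)$, so that the next returning arc $\rho_i$ becomes a returning \emph{rope} carried by that rack, and that after the split the resulting rack is again \emph{large} (its ropes decompose the complement of the new support system into disks) so that it is a legitimate vertex of $\mathcal{RR}(V)$. This requires the compatibility machinery between disk exchange sequences and full disk replacement sequences (Lemmas~\ref{lemma:fulldisk} and \ref{typical}): a full disk replacement sequence in direction of ${\mathcal D}$ produces, at each stage, simple disk systems whose boundaries can be taken as rope systems, and Lemma~\ref{typical} guarantees that the given disk exchange sequence $(\Sigma_i)$ sits compatibly inside such a full replacement sequence. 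The ropes of the racks $R_j$ are then read off from the intermediate simple disk systems, and largeness is automatic because those systems are simple. The only genuinely delicate point is bookkeeping the quasigeodesic constants so they remain uniform across all blocks; this is handled by invoking Lemma~\ref{lemma:marked-finite-quotient} and the Svarc–Milnor argument that already makes $\mathcal{RR}(V)$ quasi-isometric to $\Map(V)$, together with the corresponding statements for the fiber graphs proved in \cite{HH11}.
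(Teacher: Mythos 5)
The first thing to say is that the paper does not actually prove Theorem~\ref{thm:connect}: it is quoted as a direct consequence of the proof of Theorem~7.9 of \cite{HH11}, so the only benchmark is whether your sketch correctly reconstructs that construction. Your overall architecture does match it: follow the given directed disk exchange sequence, move inside each fiber $\mathcal{RR}(V,\Sigma_i)$ along a uniform quasigeodesic until the next returning arc of $\D$ is realized as a returning rope, split, and stop once the support system is disjoint from $\D$. (One small slip: you cannot literally ``reach $R'$'' by a path in $\mathcal{RR}(V,\Sigma_n)$ unless $\Sigma(R')=\Sigma_n$; in general the construction ends uniformly close to $R'$, as the extended version, Theorem~\ref{thm:connect-extended}, makes explicit.)

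There are, however, two genuine gaps. First, you propose to handle the delicate point -- making $\rho_i$ a carried returning rope and keeping the racks large -- by invoking Lemmas~\ref{lemma:fulldisk} and~\ref{typical}. That machinery is beside the point here: those lemmas only compare a disk exchange sequence with a full disk replacement sequence of simple disk systems, and in this paper they are used exclusively for Corollary~\ref{cor:extend-or-pick-racks} and the filling argument of Lemma~\ref{lem:compare-adjacent}, i.e.\ for comparing two splitting sequences with adjacent starting points; they produce no rope systems. The claim that ``boundaries of simple disk systems can be taken as rope systems'' is a category error: ropes are essential \emph{arcs} in $\partial V\setminus\partial\Sigma$ with endpoints on $\partial\Sigma$, not closed curves, and largeness is a property of how these arcs cut the complement of the support system; in the intended construction the ropes are read off from the intersection arcs of $\partial\D$ (and the $1$--skeleton of $R'$) with the complement of the current support system, and largeness must be arranged by hand. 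Second, and more seriously, the quantitative statement -- that the total length (all fiber quasigeodesic blocks plus all splits) is exponentially bounded in $d(R,R')$ -- is the actual content of the theorem, and your Step~4 only asserts it. One needs (a) that the number of disk exchange moves is bounded by the intersection number of $\partial\Sigma(R)$ with $\partial\D$ (each directed move strictly decreases it, cf.\ Lemma~\ref{lemma:surgery-sequences}), which is exponential in $d(R,R')$ because a single edge of $\mathcal{RR}(V)$ changes intersection numbers by at most a uniform multiplicative factor; and (b) that each fiber block has length controlled by the evolving intersection pattern, uniformly along the whole sequence, which requires tracking how that pattern changes under splits. Point (b) is precisely what the proof of Theorem~7.9 of \cite{HH11} supplies; as written, your proposal re-cites that result rather than proving it.
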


In Section~\ref{sec:disk-systems} we saw that $\D$--disk exchange sequences
starting in disjoint reduced disk systems can be compared using full
disk replacement sequences. In the rest of this section we develop a
slight generalization of racks, which will allow to similarly compare
$\D$--splitting sequences starting in adjacent vertices of $\mathcal{RR}(V)$.

Namely, define an \emph{extended rack} $R$ in the same way as a rack
except that now the support system $\D(R)$ of $R$ may be any simple
disk system instead of a reduced disk system.
The cell decomposition induced by an extended rack is defined in the
obvious way, and similarly we can talk about rigid isotopies
between extended racks. The rigid isotopy class of an
extended rack is called a \emph{rigid extended rack}. 

Rigid extended racks can be used in the same way as racks to define a
geometric model for the handlebody group. 
\begin{defi}
  The graph of rigid extended racks $\mathcal{RER}_K(V)$ is the graph
  whose vertex set is 
  the set of large rigid extended racks. Two such vertices are connected by
  an edge of length one if up to isotopy the $1$--skeleta of the cell
  decompositions induced by the corresponding vertices intersect in
  at most $K$ points. 
\end{defi}
Again, the constant $K$ is chosen in such a way that the graph of
rigid extended racks is connected. We denote the resulting graph by
$\mathcal{RER}(V)$. For future use, we choose the constant $K$ big enough
such that in addition the following holds.
For a simple disk system $\D$ let $\mathcal{RER}(V,\D)$
be the complete subgraph of $\mathcal{RER}(V)$ whose vertices are 
rigid extended racks with support system $\D$. We may choose $K$ large
enough such that for any
simple disk system $\D$ the subgraph  $\mathcal{RER}(V,\D)$ is connected.

An analog of Lemma~\ref{lemma:marked-finite-quotient} holds for rigid
extended racks as well, and implies that the handlebody group acts on
the graph of rigid extended racks with finite quotient.
Thus, the graph of rigid extended racks is quasi-isometric to the
handlebody group. Note also that every large rack is a large extended
rack. Thus the graph of rigid extended racks embeds as a
subgraph in the graph of rigid extended racks. This inclusion is a
quasi-isometry. 

\medskip
A \emph{full split} of a rigid extended rack is defined as follows. Let $R$
be a rigid extended rack and let  $\alpha$ be a returning rope of $R$. A
rigid extended rack $R'$ is called a full split of $R$ at $\alpha$
if the support system of $R'$ is obtained from $\Sigma(R)$ by a
full disk replacement along $\alpha$. Moreover,
we require that the ropes of $R'$
satisfy the analogous conditions as the ropes of a split of a
rigid rack. 

The following is a natural generalization of splitting paths to
extended racks.
\begin{defi}
  Let $\D$ be a simple disk system. A \emph{full $\D$-splitting sequence} of
  racks is an edge-path $(R_i)$ in the graph of rigid extended racks with the
  following properties.
  \begin{enumerate}[i)]
  \item There is a full disk exchange sequence $(\D_i)$ in direction of $\D$
    and a sequence of numbers $1=r_1<\dots<r_k$ such that the support
    system of $R_j$ is $\D_i$ for all $r_i\leq j\leq
    r_{i+1}-1$. The sequence $(\D_i)$ is called the \emph{associated
    full disk exchange sequence}.
  \item For $r_i\leq j\leq r_{i+1}-1$, the sequence $(R_j)$  is a uniform
    quasi-geodesic in the graph $\mathcal{RER}(V,\Sigma_i)$.
  \end{enumerate}
\end{defi}
The proof of Theorem~7.9 of \cite{HH11}
implies the following theorem 
which allows to connect two rigid racks with a full splitting sequence.
\begin{theorem}\label{thm:connect-extended}
  There is a number $k_1$ with the following property. 
  Let $R,R'$ be two rigid racks. 
  Then there is a simple disk system $\hat{\D}$ depending only on the
  support system of $R'$ with the following property.
  Let $\D(R)=\D_1, \D_2, \ldots, \D_n$ be a full disk
  exchange sequence in direction of $\hat{\D}$ such that $\D_n$
  is disjoint from $\hat{\D}$.

  Then there is an extended rigid rack $\hat R$ which is at distance at
  most $k_1$ to $R'$ in $\mathcal{RER}(V)$, and there is 
  a full splitting sequence connecting $R$ to $\hat{R}$ whose
  associated full disk replacement sequence is $(\D_i)$.
  The length of any such sequence is bounded by $e^{k_1d}$, where $d$ is
  the distance between $R$ and $R'$ in the graph of rigid extended racks.
\end{theorem}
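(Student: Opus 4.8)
The plan is to deduce Theorem~\ref{thm:connect-extended} from Theorem~\ref{thm:connect} together with the compatibility lemmas of Section~\ref{sec:disk-systems}. Fix rigid racks $R,R'$ and let $\hat{\D}$ be the simple disk system supplied by Theorem~\ref{thm:connect} for this pair; it depends only on $\Sigma(R')$. Suppose we are handed a directed full disk replacement sequence $\D(R)=\D_0,\D_1,\dots,\D_n$ in direction of $\hat{\D}$ with $\D_n$ disjoint from $\hat{\D}$. Since the reduced disk system $\Sigma(R)$ is contained in $\D_0=\D(R)$, Lemma~\ref{lemma:fulldisk} produces a compatible disk exchange sequence $\Sigma(R)=\Sigma_0,\dots,\Sigma_u$ in direction of $\hat{\D}$, encoded by a non-decreasing surjection $r\colon\{0,\dots,n\}\to\{0,\dots,u\}$ with $\Sigma_{r(l)}\subset\D_l$ for every $l$. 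As $\Sigma_u\subset\D_n$ is disjoint from $\hat{\D}$, the sequence $(\Sigma_i)$ satisfies the hypothesis of Theorem~\ref{thm:connect}, which therefore yields a splitting sequence $R=R_1,\dots,R_N=R'$ in $\mathcal{RR}(V)$ with associated disk exchange sequence $(\Sigma_i)$ and length $N\le e^{cd}$, where $d=d_{\mathcal{RER}(V)}(R,R')$ and $c$ is uniform (using that the inclusion $\mathcal{RR}(V)\hookrightarrow\mathcal{RER}(V)$ is a quasi-isometry to replace the distance in $\mathcal{RR}(V)$ by $d$). Let $C_i$ be the block of $(R_j)$ with support system $\Sigma_i$, and write $R_i^-,R_i^+$ for its first and last racks; note that $R_{i+1}^-$ immediately follows $R_i^+$ in the splitting sequence.

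Next I would \emph{promote} $(R_j)$ to a full splitting sequence along $(\D_i)$. For a rack $\rho$ with support $\Sigma_i$ and a simple disk system $\mathcal E\supset\Sigma_i$, let $P_{\mathcal E}(\rho)$ be the extended rack with support $\mathcal E$ whose $1$--skeleton arises from that of $\rho$ by adjoining the curves $\partial(\mathcal E\setminus\Sigma_i)$, cutting the ropes of $\rho$ along these curves, discarding parallel duplicates, and --- by a fixed rule --- adjoining a bounded number of further arcs carried by $\rho$ so that the result is a large extended rack with a rope on each side of each support disk. Because $\mathcal E\setminus\Sigma_i$ has at most $2g-3$ components, the cell decomposition of $P_{\mathcal E}(\rho)$ differs from that of $\rho$ by a uniformly bounded amount; hence $\rho\mapsto P_{\mathcal E}(\rho)$ is uniformly coarsely Lipschitz from $\mathcal{RR}(V,\Sigma_i)$ to $\mathcal{RER}(V,\mathcal E)$, and $P_{\mathcal E}(\rho)$ and $P_{\mathcal E'}(\rho)$ span an edge of $\mathcal{RER}(V)$ whenever $\mathcal E,\mathcal E'$ differ by a single full disk replacement (for $K$ chosen large enough). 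Now assemble the path macro-block by macro-block. For $i=0,\dots,u$ put $a_i=\min r^{-1}(i)$, $b_i=\max r^{-1}(i)$. At support $\D_{a_i}$ run a geodesic of the connected graph $\mathcal{RER}(V,\D_{a_i})$ from $P_{\D_{a_i}}(R_i^-)$ to $P_{\D_{a_i}}(R_i^+)$; by the Lipschitz property this has length at most a uniform multiple of the length of $C_i$. Then at the remaining supports $\D_{a_i+1},\dots,\D_{b_i}$ of this macro-block insert the single extended racks $P_{\D_l}(R_i^+)$. Consecutive vertices span edges of $\mathcal{RER}(V)$: inside a macro-block this is the comparison of $P_{\mathcal E}$ and $P_{\mathcal E'}$ just noted, and at the passage from macro-block $i$ to $i+1$ the support changes by one advancing full disk replacement while the content passes from $R_i^+$ to the adjacent rack $R_{i+1}^-$, so the two $1$--skeleta still meet boundedly. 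The first vertex is $P_{\D_0}(R_1)=R$ (no promotion is needed since $\D_0=\Sigma_0=\Sigma(R)$), the support systems run precisely through $(\D_i)$, and the last vertex is $\hat R:=P_{\D_n}(R')$, whose support $\D_n$ contains $\Sigma_u=\Sigma(R')$.

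It remains to verify the two quantitative claims. For the length, the assembled path has at most a uniform multiple of $N+n$ vertices (using $u\le n$); since $N\le e^{k_1 d}$ and $n$, being the length of a directed full disk replacement sequence that terminates disjoint from $\hat{\D}$, is also bounded exponentially in $d$ by the standard surgery estimates --- $u$ is controlled by the geometric intersection number of $\partial\Sigma(R)$ with $\partial\hat{\D}$, which is exponential in $d$, and the number $b_i-a_i$ of non-advancing full disk replacements in each macro-block is uniformly bounded because simple surgery strictly decreases the relevant intersection numbers --- the bound $e^{k_1 d}$ follows, for this and for any full splitting sequence along $(\D_i)$. For the claim that $\hat R=P_{\D_n}(R')$ lies within uniform distance $k_1$ of $R'$, one uses the precise choice of $\hat{\D}$ from the proof of Theorem~7.9 of \cite{HH11}: that choice forces a disk system which is disjoint from $\hat{\D}$ and contains $\Sigma(R')$ to be obtained from $\Sigma(R')$ by adjoining disks from a finite list depending only on $\Sigma(R')$, so that $\D_n$ has one of finitely many isotopy types up to the action of $\Map(V)$ and $P_{\D_n}(R')$ stays in a bounded neighbourhood of $R'$.

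The step I expect to be the main obstacle is guaranteeing that all of the coarse-geometric constants above --- the Lipschitz constant of the promotion maps, the implicit bound in ``meet boundedly'', and the distance $k_1$ --- can be chosen \emph{uniform} in $i$. A priori the graphs $\mathcal{RR}(V,\Sigma_i)$ and $\mathcal{RER}(V,\D_{a_i})$ are quasi-isometric to different, and in general unrelated, subgroups of $\Map(V)$ (stabilizers of a reduced, respectively of a general simple, disk system), so the required comparison between them is not uniform for free. This is resolved exactly as in Lemma~\ref{lemma:marked-finite-quotient}: there are only finitely many simple disk systems up to the action of $\Map(V)$, the stabilizer of a simple disk system acts on the corresponding subgraph of $\mathcal{RER}(V)$ with finite quotient and finite stabilizers, and for each fixed bound $K'$ the relation ``the two induced $1$--skeleta meet in at most $K'$ points'' has finitely many orbits; combining these facts, every constant occurring in the argument depends only on the genus of $V$, as required.
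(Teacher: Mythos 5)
You take a genuinely different route from the paper: the paper obtains Theorem~\ref{thm:connect-extended} by re-running the proof of Theorem~7.9 of \cite{HH11} with extended racks and full splits, whereas you try to reduce it to Theorem~\ref{thm:connect} via a ``promotion'' map $\rho\mapsto P_{\mathcal E}(\rho)$. The central step of that reduction has a genuine gap: the claim that $P_{\mathcal E}(\rho)$ and $P_{\mathcal E'}(\rho)$ span an edge of $\mathcal{RER}(V)$ when $\mathcal E,\mathcal E'$ differ by one full disk replacement (and likewise at the passage between macro-blocks, from $P_{\D_{b_i}}(R_i^+)$ to $P_{\D_{a_{i+1}}}(R_{i+1}^-)$). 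The new disks of $\D_{l+1}$ have boundary consisting of a subarc of $\partial\hat{\D}$ and a subarc of $\partial\D_l$, and the subarc of $\partial\hat{\D}$ can cross the ropes of the rack being promoted in arbitrarily many points: the racks $R_i^{\pm}$ in the middle of the splitting sequence are not yet split in the direction of $\hat{\D}$, so no bound on this intersection number in terms of the genus (or of the edge constant $K$, or of the adjacency of $R_i^+$ and $R_{i+1}^-$ in $\mathcal{RR}(V)$, which only controls intersections with $\partial\Sigma_i\cup\mbox{ropes}(R_i^+)$, not with $\partial(\D_l\setminus\Sigma_i)$) is available. Hence for no fixed $K$ is your assembled path an edge-path in $\mathcal{RER}(V)$, so it is not a full splitting sequence. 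Supplying bounded intersection at the moment the support advances is exactly what an honest full split provides, because it re-splices the ropes together with the support; this is the content the paper imports from the proof of Theorem~7.9 of \cite{HH11}. Note also that in the paper's logical order the conversion you are constructing by hand is Corollary~\ref{cor:extend-or-pick-racks}~\textit{i)}, which is \emph{deduced from} Theorem~\ref{thm:connect-extended}, not available for its proof.

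There are two further unproven points. First, setting $\hat R=P_{\D_n}(R')$ presupposes $\Sigma(R')\subset\D_n$, but all that is known is that $\D_n$ is disjoint from $\hat{\D}$; both this and the claim that $\hat R$ lies within uniform distance of $R'$ are justified only by an appeal to unstated features of the choice of $\hat{\D}$ in \cite{HH11}. Second, the theorem bounds the length of \emph{any} full splitting sequence over $(\D_i)$ by $e^{k_1d}$, while your argument bounds only the particular path you build; the auxiliary assertion that the number of non-advancing full replacements per macro-block is uniformly bounded is neither proved nor the right mechanism (what is needed is that $n$ itself, and the diameter in $\mathcal{RER}(V,\D_l)$ of each support block, are exponentially bounded in $d$ via intersection-number bookkeeping with $\hat{\D}$, as in \cite{HH11}). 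These secondary points look repairable, but the adjacency failure above is structural and defeats the proposed reduction as written.
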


Combining Lemmas~\ref{lemma:fulldisk} and~\ref{typical} with
Theorem~\ref{thm:connect-extended} above, we obtain the following.
\begin{cor}\label{cor:extend-or-pick-racks}
  There is a number $k_2>0$ with the following property.
  \begin{enumerate}[i)]
  \item Let $(R_i), i=1,\ldots N$ be a $\D$--splitting sequence of racks with
    associated disk exchange sequence $(\Sigma_j)$. Let $(\D_j)$ be a
    full disk replacement sequence compatible with $(\Sigma_j)$. Then there is a
    full $\D$--splitting sequence $\widetilde{R}_k, k=1,\ldots K$ such that the
    following holds. The associated full disk replacement sequence to
    $(\widetilde{R}_k)$ is $\D_j$. 
    Furthermore, $\widetilde{R}_1 = R_1$ and the distance between
    $\widetilde{R}_K$ and $R_N$ is at most $k_1$. The length $K$ of
    any such sequence is at most $e^{k_2d}$, where $d$ is the distance
    between $R_0$ and $R_K$ in the graph of rigid racks.
  \item Conversely, suppose that $\widetilde{R}_k, k=1,\ldots, K$ is a full
    $\D$--splitting 
    sequence with associated full disk replacement sequence
    $\D_j$. Suppose further that $(\Sigma_j)$ is a disk exchange sequence
    compatible with $(\D_i)$. 
    If $\widetilde{R}_1$ is a large rack, then there is a
    $\D$--splitting sequence 
    $R_1,R_2,\ldots, R_N$ whose associated disk exchange sequence is
    $(\Sigma_j)$ such that $R_N$ is of distance at most $k_1$ to
    $\widetilde{R}_K$. The length $N$ of
    any such sequence is at most $e^{k_2d}$, where $d$ is the distance
    between $R_0$ and $R_N$ in the graph of rigid racks.
  \end{enumerate}
\end{cor}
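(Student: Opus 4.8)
The plan is to deduce both assertions by transporting (full) splitting sequences between the graph of rigid racks $\mathcal{RR}(V)$ and the graph of rigid extended racks $\mathcal{RER}(V)$. The comparison of support systems is already available at the level of disks: Lemma~\ref{typical} turns a disk exchange sequence into a compatible full disk replacement sequence, and Lemma~\ref{lemma:fulldisk} does the reverse. What remains is to realize a prescribed compatible (full) disk replacement sequence by an actual (full) splitting sequence of controlled length, and for this I would invoke the connectivity results: Theorem~\ref{thm:connect-extended} for part~(i) and Theorem~\ref{thm:connect} for part~(ii). Throughout I regard the fixed disk system $\D$ as the disk system attached, via Theorem~\ref{thm:connect} (respectively Theorem~\ref{thm:connect-extended}), to the support system of the target rack; this is the consistent choice that makes the ``direction of $\D$'' hypotheses of those theorems match the ``direction of $\D$'' appearing in the definitions of (full) splitting sequences.

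For (i): the endpoints $R_1$ and $R_N$ of the given $\D$--splitting sequence are rigid racks and the associated disk exchange sequence $(\Sigma_j)$ is in direction of $\D$ with final term disjoint from $\D$. First I would use Lemma~\ref{typical} to choose the compatible full disk replacement sequence $(\D_j)$ so that it begins at $\D_0=\Sigma(R_1)=\D(R_1)$; appending finitely many further full disk replacement moves --- in a way that leaves the sequence compatible with $(\Sigma_j)$, since compatibility only concerns support systems --- I may also assume that its final term is disjoint from $\D$. Then I would feed $(\D_j)$ into Theorem~\ref{thm:connect-extended} with $R=R_1$ and $R'=R_N$: this produces a rigid extended rack $\hat R$ within distance $k_1$ of $R_N$ in $\mathcal{RER}(V)$ and a full splitting sequence $\widetilde R_1=R_1,\dots,\widetilde R_K=\hat R$ whose associated full disk replacement sequence is exactly $(\D_j)$ and whose length is at most $e^{k_1 d'}$ with $d'=d_{\mathcal{RER}(V)}(R_1,R_N)$. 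Since $\mathcal{RR}(V)\hookrightarrow \mathcal{RER}(V)$ is a quasi-isometry, $d'$ is controlled linearly by $d=d_{\mathcal{RR}(V)}(R_1,R_N)$, so after absorbing $k_1$ and the quasi-isometry constants into a single $k_2$ one gets $K\le e^{k_2 d}$, as required.

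For (ii): now $(\widetilde R_k)$ is a full $\D$--splitting sequence with associated full disk replacement sequence $(\D_j)$, the sequence $(\Sigma_j)$ is a compatible disk exchange sequence (whose existence is Lemma~\ref{lemma:fulldisk}), and $\widetilde R_1$ is by hypothesis a large, non-extended rack --- so $\D_0=\D(\widetilde R_1)=\Sigma(\widetilde R_1)$ is reduced, and I may take the reduced disk system produced by Lemma~\ref{lemma:fulldisk} inside $\D_0$ to be $\D_0$ itself, making $\Sigma(\widetilde R_1)$ the first term of $(\Sigma_j)$. Its last term $\Sigma_N$ is a reduced disk system contained in the last term of $(\D_j)$; since the number of complementary components of a simple disk system in $V$ is uniformly bounded, a rigid rack $R'$ with support system $\Sigma_N$ can be chosen at uniformly bounded distance from $\widetilde R_K$ in $\mathcal{RER}(V)$. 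Because $\widetilde R_1$ is an honest rack, Theorem~\ref{thm:connect} now applies with $R=\widetilde R_1$ and this $R'$, fed with the disk exchange sequence $(\Sigma_j)$ --- which is in direction of $\D$ and terminates disjointly from $\D$ --- and yields a $\D$--splitting sequence $R_1=\widetilde R_1,\dots,R_N=R'$ whose associated disk exchange sequence is $(\Sigma_j)$, ending within bounded distance of $\widetilde R_K$, of length at most exponential in $d_{\mathcal{RR}(V)}(\widetilde R_1,R')$, hence at most $e^{k_2 d}$ after enlarging $k_2$.

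The step I expect to be the main obstacle is none of the individual constructions but the bookkeeping that makes the above applications legitimate: checking that the ``direction'' disk systems implicit in Theorems~\ref{thm:connect} and~\ref{thm:connect-extended} --- each ``depending only on the support system of $R'$'' --- can be identified consistently with the fixed $\D$ used to form the (full) splitting sequences, and that the normalization ``the last disk system is disjoint from $\D$'' survives the passage between a disk exchange sequence and a compatible full disk replacement sequence in both directions. Both points should come down to the fact that compatibility pins down support systems only up to the uniformly bounded number of disks separating complementary components, so a uniformly bounded correction restores the required normalizations; granting this, the quantitative conclusions are immediate from the exponential length bounds in Theorems~\ref{thm:connect} and~\ref{thm:connect-extended} together with the quasi-isometries relating $\mathcal{RR}(V)$, $\mathcal{RER}(V)$ and $\Map(V)$.
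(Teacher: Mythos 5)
Your proposal assembles exactly the ingredients the paper itself cites (Lemmas~\ref{lemma:fulldisk} and~\ref{typical} together with Theorems~\ref{thm:connect} and~\ref{thm:connect-extended}), so in spirit it follows the paper's one-sentence derivation. However, as written it does not prove the statement, for two concrete reasons. First, the corollary requires the output sequence to have the \emph{given} compatible sequence as its associated sequence: in (i) the full splitting sequence must realize the prescribed $(\D_j)$, start at $R_1$, and end within $k_1$ of $R_N$. You instead re-choose $(\D_j)$ via Lemma~\ref{typical}, assume it begins at $\D_1=\Sigma(R_1)$, and append further full disk replacement moves to force terminal disjointness from $\D$. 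The first assumption already fails in the only place the corollary is used (in the proof of Theorem~\ref{thm:main} the sequence starts at $\D_1=\Sigma_k\cup\Sigma_{k+1}\supsetneq\Sigma(R_k)$), and appending moves means the sequence realized by Theorem~\ref{thm:connect-extended} is no longer the prescribed $(\D_j)$; truncating it back destroys the only endpoint control the theorem provides (closeness of the \emph{final} rack to $R'$). The same defect occurs in (ii), where you replace the given $(\Sigma_j)$ by one you extract from Lemma~\ref{lemma:fulldisk}.

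Second, Theorems~\ref{thm:connect} and~\ref{thm:connect-extended} only apply to sequences in direction of the specific disk system $\hat\D$ (resp.\ $\mathcal D$) determined by the support system of the target $R'$, and terminating disjointly from it. The corollary's $\D$ is a fixed, a priori unrelated simple disk system, and the given $(\D_j)$, $(\Sigma_j)$ are merely compatible and directed toward $\D$; nothing makes them directed toward, or terminally disjoint from, the theorem's $\hat\D$. You flag this as ``bookkeeping'' and propose that a ``uniformly bounded correction'' fixes it because compatibility pins down support systems up to boundedly many disks, but that remark concerns support systems, not the direction disk system, and does not identify a $\D$-directed sequence with a $\hat\D$-directed one. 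What actually closes both gaps is the construction underlying Theorem~7.9 of \cite{HH11}: the splitting (and full splitting) sequences there are built step by step along an arbitrary prescribed directed (full) disk replacement sequence, and it is this construction -- not the black-box statements of Theorems~\ref{thm:connect} and~\ref{thm:connect-extended} -- that yields a sequence with the prescribed associated sequence, prescribed initial rack, endpoint within uniformly bounded distance of the other given endpoint, and exponentially bounded length. That is the content your proposal leaves unproven.
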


\section{The Dehn function of the handlebody group}
\label{s:upperbound-and-dehn-function}
In this section we prove the main result of this note.
\begin{theorem}\label{thm:main}
  The Dehn function of the handlebody group has at most exponential
  growth rate.
\end{theorem}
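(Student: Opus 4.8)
The plan is to use the geometric model $\mathcal{RER}(V)$ together with the distinguished paths developed in the previous sections to fill an arbitrary loop in the handlebody group by an $L$-quasi-triangulation whose number of cells is bounded exponentially in the length of the loop. Since $\mathcal{RER}(V)$ is quasi-isometric to $\Map(V)$, it suffices to show that the "coned-off" $1$-skeleton of $\mathcal{RER}(V)$ satisfies an exponential isoperimetric inequality with respect to some fixed finite collection of generating loops, or — what is cleaner — to show directly that any edge-loop $c$ of length $n$ in $\mathcal{RER}(V)$ bounds a van Kampen-type diagram of exponential area, using as "relators" the bounded-length loops coming from the action of $\Map(V)$. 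The key geometric input is Corollary~\ref{cor:extend-or-pick-racks}, which lets us pass freely between $\D$-splitting sequences of racks and full $\D$-splitting sequences of extended racks at the cost of only a uniformly bounded and exponentially-long detour.

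The key steps, in order, are as follows. First, reduce to filling an edge-loop $c = (v_0, v_1, \ldots, v_n = v_0)$ in the graph of rigid extended racks $\mathcal{RER}(V)$; by quasi-isometry and a standard argument (e.g.\ \cite{HV96}) the growth type of the Dehn function can be computed in this model, filling by "triangles" whose perimeter is uniformly bounded. Second, fix a basepoint vertex $R_* = v_0$. For each $i$, use Theorem~\ref{thm:connect-extended} to choose a full $\D_i$-splitting sequence $P_i$ from (a bounded neighborhood of) $v_0$ to (a bounded neighborhood of) $v_i$, where $\D_i = \hat{\D}$ depends only on the support system of $v_i$; each $P_i$ has length at most $e^{k_1 d(v_0,v_i)} \le e^{k_1 n}$. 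Third — and this is the crux of the combinatorial bookkeeping — compare consecutive paths $P_i$ and $P_{i+1}$. Since $v_i$ and $v_{i+1}$ are adjacent, their support systems are a bounded distance apart, so the target disk systems $\hat{\D}_i$ and $\hat{\D}_{i+1}$ may be connected by a uniformly bounded full disk replacement sequence; applying Lemmas~\ref{lemma:fulldisk} and~\ref{typical} together with Corollary~\ref{cor:extend-or-pick-racks}, one matches up $P_i$ and $P_{i+1}$ step-by-step via compatible full disk replacement sequences, so that corresponding vertices along the two paths stay within uniformly bounded distance in $\mathcal{RER}(V)$. This produces a "ladder" between $P_i$ and $P_{i+1}$ consisting of at most $e^{k_2 n}$ rungs, each rung a loop of uniformly bounded perimeter, hence fillable by a uniformly bounded number of cells. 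Fourth, assemble: the loop $c$ is filled by the $n$ ladders between consecutive $P_i$'s together with the bounded-area "caps" near each $v_i$ coming from the distance-$\le k_1$ slack in Theorem~\ref{thm:connect-extended}. The total area is at most $n \cdot e^{k_2 n} \cdot O(1) \le e^{O(n)}$, which is the desired exponential bound.

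The main obstacle is the third step: controlling the ladder between two splitting sequences that do not start at exactly the same vertex and are directed toward different (though close) target disk systems. Full disk replacement sequences are the tool that makes this possible — they are "confluent enough" that two splitting sequences with compatible associated disk replacement sequences track each other — but one must check carefully that (a) the compatibility relation of the earlier section can be upgraded from disk systems to extended racks without losing the uniform bounds, (b) the monotone reparametrization maps $r$ used in the definition of compatibility can be chosen coherently along the whole loop rather than just for a single pair, and (c) the bounded slack $k_1$ introduced at each $v_i$ by Theorem~\ref{thm:connect-extended} can be absorbed. Point (b) in particular requires a small amount of care, since a priori the way $P_i$ "splits" at a given step need not agree with the way $P_{i+1}$ splits; here one uses that any two full $\D$-splitting sequences with the same associated full disk replacement sequence are a bounded distance apart in $\mathcal{RER}(V,\D_j)$ at each stage, which holds because each subgraph $\mathcal{RER}(V,\D)$ was arranged to be connected (and, with the chosen $K$, has uniformly bounded diameter on the relevant bounded pieces). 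Granting these points, the exponential estimate follows by the outline above.
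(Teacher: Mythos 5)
Your overall scaffolding (model the group by $\mathcal{RER}(V)$, cone each vertex of the loop to a basepoint by a splitting sequence of length at most exponential in $L$ via Theorem~\ref{thm:connect-extended}, and fill the regions between consecutive cone paths) is the same as the paper's, but the crux of your third step contains a genuine gap. You claim that two splitting sequences whose associated (full) disk replacement sequences are compatible ``track each other,'' so that corresponding vertices stay at uniformly bounded distance and the region between consecutive paths decomposes into $e^{k_2n}$ rungs of \emph{uniformly bounded} perimeter. This is false, and your justification --- that $\mathcal{RER}(V,\D)$ ``has uniformly bounded diameter on the relevant bounded pieces'' --- is where it breaks: $\mathcal{RER}(V,\D)$ is connected but is quasi-isometric to the stabilizer of $\partial\D$ in ${\rm Map}(\partial V)$, an infinite group, so it has infinite diameter, and two splitting sequences sharing the same associated disk exchange sequence may diverge arbitrarily far inside each stage $\mathcal{RER}(V,\Sigma_k)$ (the quasigeodesics required in the definition of a splitting sequence join \emph{different} pairs of endpoints). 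The paper does not have, and does not need, any fellow-traveling statement; instead the region between two compatible sequences is cut into subloops lying in the stabilizer subgraphs $\mathcal{RER}(V,\Sigma_k)$, whose lengths are comparable to $N+M$ (hence themselves exponential in $L$), and these are filled using the fact that each $\mathcal{RER}(V,\D)$ is undistorted and has quadratic Dehn function (Lemma~\ref{lemma:filling-in-map}, resting on Masur--Minsky and the Lipschitz-retract property of curve stabilizers). This yields the cubic-in-$(N+M)$ filling of Lemma~\ref{lem:compare-adjacent}, which is still exponential in $L$, and is the ingredient your count $n\cdot e^{k_2n}\cdot O(1)$ silently replaces by an unproved bounded-rung assumption.

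A second, smaller deviation compounds the problem: you cone from the basepoint $v_0$ \emph{outward} to each $v_i$, directing the path $P_i$ toward a disk system $\hat\D_i$ depending on the support system of $v_i$, so consecutive paths are directed toward \emph{different} targets, and you then need to compare sequences with different directions --- something none of Lemma~\ref{lemma:fulldisk}, Lemma~\ref{typical} or Corollary~\ref{cor:extend-or-pick-racks} provides, since compatibility is only defined for sequences in direction of one and the same $\D$. The paper avoids this entirely by applying Theorem~\ref{thm:connect-extended} with $R'$ equal to the basepoint, so that \emph{all} cone paths run from the loop vertices toward the basepoint in direction of the single fixed system $\hat\Sigma$; adjacent cone paths are then compared through a common interpolating full splitting sequence produced by Lemma~\ref{typical} and Corollary~\ref{cor:extend-or-pick-racks}. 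To repair your argument you should reverse the direction of your cone paths as in the paper, and replace the bounded-rung claim by an appeal to the quadratic isoperimetric inequality in the stabilizer subgraphs.
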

To begin, we recall the definitions of the Dehn function and growth rate.
Let $G$ be a finitely presented group. Choose a finite
generating $\mathcal{S}$ and let $\mathcal{R}$ be a 
finite defining set of
relations for $G$. This means the following.
The set $\mathcal{R}$ generates a subgroup $R_0$
of the free group $F(\mathcal{S})$ with generating set
$\mathcal{S}$. 
Denote by $R$ the normal closure of
$R_0$ in $F(\mathcal{S})$. The set $\mathcal{R}$ is called a defining
set of relations for $G$ if 
the quotient $F(\mathcal{S}) / R$ is isomorphic to $G$.

Every $r\in R<F({\mathcal S})$ can be written as a product of
conjugates of elements in $\mathcal{R}$:
$$r = \Pi_{i=1}^n r_i^{\gamma_i}, \quad\quad
r_i\in\mathcal{R},\gamma_i\in G.$$
We call the minimal length $n$ of such a product \emph{the area
  $\mathrm{Area}(r)$ needed to fill the relation $r$}. On the other
hand, $r$ can be written 
as a word in the elements of $\mathcal{S}$. We call the minimal
length of such a word the \emph{the length $l(r)$ of the loop $r$}.

The \emph{Dehn function of $G$} is then be defined by
$$\delta(n) = \max\{\mathrm{Area}(r) | r\in R \mbox{ with } l(r)\leq n\}.$$

The function $\delta$ depends on the choice of the generating set
$\mathcal{S}$ and the set of relations $\mathcal{R}$. However, 
the Dehn function obtained from
different generating sets and defining relations
are equivalent in the following sense. Say
that two functions $f, g: \NN \to \NN$ are \emph{of the same growth
  type}, if there are numbers $K,L>0$ such that 
$$L^{-1} \cdot g(K^{-1}\cdot x - K) - L \leq f(x) \leq L \cdot
g(K\cdot x + K) + L$$ 
for all $x \in \NN$. 

In this section we use the graph $\mathcal{RER}(V)$ of rigid extended racks as a
geometric model for the handlebody group.

To estimate the Dehn function, we consider a loop $\gamma$ in
$\mathcal{RER}(V)$ of length $R>0$. We have to show that there
is a number $k>0$ and that there are at most $e^{kR}$ 
loops $\zeta_1,\dots,\zeta_m$ 
of length at most $k$ so that $\gamma$ can
be contracted to a point in $m$ steps consisting each
of replacing a subsegment of $\zeta_i$ by another
subsegment of $\zeta_i$. This suffices, since each loop $\zeta_i$ as
above corresponds to a cycle in the handlebody group which can be
filled with uniformly small area.

\medskip
Recall from Section~\ref{sec:racks} the definition of the graph
$\mathcal{RER}(V,\D)$. The following lemma allows to control the
isoperimetric function of these subgraphs.
\begin{lemma}\label{lemma:filling-in-map}
  Let $\D$ be a simple disk system for $V$.
  \begin{enumerate}[i)]
  \item $\mathcal{RER}(V,\D)$ is a connected subgraph of
    $\mathcal{RER}(V)$ which is equivariantly quasi-isometric to the stabilizer
    of $\partial \D$ in the mapping class group of $\partial V$.
  \item $\mathcal{RER}(V,\Sigma)$ is quasi-isometrically embedded in
    $\mathcal{RER}(V)$.
  \item Any loop in $\mathcal{RER}(V,\Sigma)$ can be filled with area
    coarsely bounded quadratically in its length.
  \end{enumerate}
\end{lemma}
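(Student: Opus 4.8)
The plan is to establish all three statements essentially at once, using the identification in (i) and then importing the known geometry of mapping class groups. First I would prove (i): fix a simple disk system $\D$ and consider the stabilizer $\mathrm{Stab}(\partial\D)$ of the boundary multicurve $\partial\D$ in $\Map(\partial V)$. By Lemma~\ref{lemma:han-action-on-disk-sys}, every mapping class fixing the isotopy class of $\D$ lies in $\Map(V)$, so this stabilizer actually coincides with the stabilizer of $\D$ inside $\Map(V)$. Now $\mathrm{Stab}(\partial\D)$ acts on the vertex set of $\mathcal{RER}(V,\D)$ — these are the rigid extended racks with support system exactly $\D$ — and the analog of Lemma~\ref{lemma:marked-finite-quotient} for extended racks (restricted to a fixed support system) gives that this action has finite quotient and finite point stabilizers. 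Since $\mathcal{RER}(V,\D)$ was arranged to be connected, the Svarc--Milnor lemma yields the equivariant quasi-isometry asserted in (i).

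For (iii), I would combine (i) with the structure theory of curve-stabilizers in mapping class groups. The stabilizer of a multicurve $\partial\D$ in $\Map(\partial V)$ is, up to finite index, a direct product of mapping class groups of the components of the cut surface $\partial V\setminus\partial\D$, extended by a free abelian group of Dehn twists about the curves of $\partial\D$. Each mapping class group factor is automatic \cite{Mo95}, hence has quadratic Dehn function; the abelian twist group has quadratic (in fact at most quadratic) Dehn function; and a direct product of finitely many groups with quadratic Dehn function again has quadratic Dehn function, as does any group commensurable to such a product. Transporting this through the quasi-isometry of (i), and using that the growth type of the Dehn function — equivalently, of the isoperimetric function of any quasi-isometric graph — is a quasi-isometry invariant, gives that loops in $\mathcal{RER}(V,\D)$ are filled with area coarsely quadratic in their length, which is exactly (iii).

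Statement (ii) is the quasi-isometric embedding $\mathcal{RER}(V,\D)\hookrightarrow\mathcal{RER}(V)$. The inclusion is $1$-Lipschitz by definition of the edges, so the content is the lower bound on distances. Here I would use the fact that $\mathcal{RER}(V,\D)$ is quasi-isometric to a curve-stabilizer and that the ambient graph $\mathcal{RER}(V)$ is quasi-isometric to $\Map(V)$; the support system of a rigid extended rack is a coarsely well-defined function of the rack (two racks joined by an edge have support systems meeting in at most $K$ points, hence of uniformly bounded distance in the disk system graph), so a path in $\mathcal{RER}(V)$ between two vertices with support system $\D$ passes through support systems of bounded distance from $\D$, and one can push such a path back into $\mathcal{RER}(V,\D)$ at bounded multiplicative and additive cost using the connectivity of the fibers $\mathcal{RER}(V,\D')$ together with Lemma~\ref{lemma:marked-finite-quotient}. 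Alternatively, and more cleanly, one invokes the standard fact that the stabilizer of a multicurve is an undistorted subgroup of the mapping class group (it is a quasi-isometrically embedded subgroup, by e.g. the subsurface projection / Masur--Minsky machinery, or by direct combinatorial arguments), and then the quasi-isometries of (i) and of the previous section translate this into the desired statement.

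The main obstacle I anticipate is statement (ii): verifying undistortedness of the curve-stabilizer, or equivalently controlling how far a geodesic in $\mathcal{RER}(V)$ between two racks with support system $\D$ can stray in terms of support systems, requires a genuine geometric input about $\Map(\partial V)$ rather than the soft Svarc--Milnor bookkeeping that handles (i) and (iii). Statements (i) and (iii) are essentially formal once the finite-quotient action and the automaticity of mapping class groups are in hand.
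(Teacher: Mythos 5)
Your parts (i) and (ii) are essentially the paper's argument: (i) is the same Svarc--Milnor argument for the action of the stabilizer $G$ of $\partial\D$ on $\mathcal{RER}(V,\D)$ (with the observation from Lemma~\ref{lemma:han-action-on-disk-sys} that $G$ lies in the handlebody group), and your ``cleaner'' route to (ii) -- undistortedness of multicurve stabilizers in $\Map(\partial V)$ via \cite{MM00}/\cite{H09b}, transported through (i) and the fact that $G\le\Map(V)\le\Map(\partial V)$ -- is exactly what the paper does. Your first sketch for (ii) (pushing a path in $\mathcal{RER}(V)$ back into the fiber) is too vague to count as a proof, but you correctly fall back on the standard input.

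The genuine gap is in (iii). You claim that the stabilizer of $\partial\D$ is, up to finite index, a direct product of mapping class groups of the complementary pieces with a free abelian twist group, and you then use that quadratic Dehn functions pass to finite products and commensurable groups. But the cutting homomorphism exhibits the stabilizer only as a \emph{central extension} of the cut-surface mapping class group by the free abelian group generated by the twists about the curves of $\partial\D$, and such an extension need not split, even virtually: for pieces of positive genus the extension class (a boundary Euler class) is rationally nontrivial, hence survives on every finite-index subgroup, so the stabilizer is not commensurable to the product you describe. Moreover, quadratic isoperimetry is \emph{not} inherited by central extensions with abelian kernel in general -- the integral Heisenberg group is a central extension of $\ZZ^2$ by $\ZZ$ and has cubic Dehn function -- so the structural claim cannot simply be weakened to ``extension'' without losing the conclusion. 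In the present situation the complementary pieces are planar (the complement of a simple disk system consists of spotted balls), and one could try to salvage your route by producing an actual virtual splitting (e.g.\ via winding/linking-number homomorphisms to $\ZZ$ killing the twist classes) or by showing the extension class is bounded and invoking the quasi-isometry of such extensions with the product; either way this is a missing argument, not a formality. The paper avoids the issue entirely: it uses that $G$ is a Lipschitz retract of $\Map(\partial V)$ (a consequence of \cite{MM00}, discussed in \cite{HM10}), and since $\Map(\partial V)$ is automatic \cite{Mo95} and hence has quadratic Dehn function, the retract $G$ -- and by (i) the graph $\mathcal{RER}(V,\D)$ -- satisfies a quadratic isoperimetric inequality.
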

\begin{proof}
  $\mathcal{RER}(V,\Sigma)$ is connected by definition of the graph of
  rigid extended racks (see Section~\ref{sec:racks}).
  
  Let $G$ be the stabilizer of $\partial \D$ in the mapping class group
  of $\partial V$. The group $G$ is contained in the handlebody group 
  since every homeomorphism of the boundary of a spotted ball extends
  to the interior. The group $G$ acts on
  $\mathcal{RER}(V,\D)$ with finite quotient and finite point
  stabilizers. 
  To show this, note that up to the action of the
  mapping class group, there are only
  finitely many isotopy classes of cell decompositions 
  of a bordered sphere with uniformly few cells. 
  Thus by the Svarc-Milnor lemma, 
  $\mathcal{RER}(V,\D)$ is equivariantly quasi-isometric to $G$, showing
  \textit{i)}.

  The stabilizer $G$ of $\partial \D$ 
  is quasi-isometrically embedded in the full
  mapping class group of $\partial V$ (see \cite{MM00} or
  \cite[Theorem~2]{H09b}).
  Hence 
   $G$ is also quasi-isometrically embedded 
   in the handlebody
  group. Together with \textit{i)} this shows \textit{ii)}.

  The group $G$ is 
  a Lipschitz retract of the mapping class group 
 of $\partial V$
  (see \cite{HM10} for a detailed discussion of this fact which
  is a direct consequence of the work of Masur and Minsky
  \cite{MM00}).
   Mapping class groups are automatic \cite{Mo95} and
  hence have quadratic Dehn function. Then the same holds
  true for $G$ (compare again \cite{HM10}).
  This implies claim \textit{iii)}.
\end{proof}

\medskip

As the next step, we use Corollary~\ref{cor:extend-or-pick-racks}
to control splitting paths starting at adjacent
points in the graph of marked racks. We show that 
these paths can be constructed in such a way that the
resulting loop can be filled with controlled area.
Together with the length estimate for marked splitting paths from
Theorem~\ref{thm:connect-extended} this will imply the exponential 
bound for the Dehn function. 

The main technical tool in this approach is given by the following lemma.
\begin{lemma}\label{lem:compare-adjacent}
  For each $k>0$ there is a number $k_3>0$ with the following property.

  Let $\D$ be a simple disk system.
  Let $R_i, i=1,\ldots, N$ be a $\D$--splitting sequence of rigid
  racks and let $\widetilde{R}_j, j=1,\ldots, M$ be a full
  $\D$--splitting sequence of extended racks such that the following
  holds.
  \begin{enumerate}[i)]
  \item The rigid extended racks $R_1$ and $\widetilde{R}_1$ (respectively
    $R_N$ and $\widetilde{R}_M$) have distance at most $k$ in the
    graph of rigid extended racks.
  \item The associated disk exchange sequences of $R_i$ and
    $\widetilde{R}_j$ are compatible.
  \end{enumerate}
  Then the loop $\gamma$ in $\mathcal{RER}(V)$ formed by the sequences
  $(R_i)$, $(\widetilde{R}_j)$ and geodesics between $R_1$ and
  $\widetilde{R}_1$ and $R_N$ and $\widetilde{R}_M$ can be filled with
  area $k_3(N+M)^3$.
\end{lemma}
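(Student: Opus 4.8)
The plan is to fill the loop $\gamma$ by subdividing it into a grid of small quadrilaterals using the compatibility structure between the two splitting sequences, and then filling each small quadrilateral separately using Lemma~\ref{lemma:filling-in-map}(iii).

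First I would use the compatibility hypothesis (ii) to organize the two sequences. The associated disk exchange sequence $(\Sigma_a)$ of $(R_i)$ and the associated full disk replacement sequence $(\D_b)$ of $(\widetilde R_j)$ are compatible via a non-decreasing surjection, so $\Sigma_a \subset \D_{b}$ for matched indices. I would refine this so that the index set of both $\{R_i\}$ and $\{\widetilde R_j\}$ is broken into corresponding blocks $I_1, I_2, \ldots$ and $J_1, J_2, \ldots$, where on block $I_t$ the support system is a fixed reduced disk system $\Sigma$, on block $J_t$ the support system is a fixed simple disk system $\D \supset \Sigma$, and the transitions between consecutive blocks are synchronized. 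Within each block, the relevant $R_i$ all lie in $\mathcal{RER}(V,\Sigma)$ and the relevant $\widetilde R_j$ all lie in $\mathcal{RER}(V,\D)$, and — crucially — since $\Sigma \subset \D$, there is a natural way (inherited from the splits and full splits sharing the returning ropes) to associate to each $R_i$ in the block a nearby extended rack with support system $\D$, namely one built from the same ropes. This gives a "ladder": for each $i$ in the block, a vertical edge-path of uniformly bounded length from $R_i$ into $\mathcal{RER}(V,\D)$, landing near $\widetilde R_{j(i)}$ for the compatible index $j(i)$.

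Next I would fill the ladder. For a single block, the loop bounded by the segment of $(R_i)$, the segment of $(\widetilde R_j)$, and the two short vertical sides at the block boundaries decomposes into: (a) the "vertical strip" between $(R_i)$ and its shadow in $\mathcal{RER}(V,\D)$, which is a union of at most $(N+M)$ quadrilaterals each of uniformly bounded perimeter (bounded by two consecutive vertical paths, one edge of $(R_i)$, and one short path in $\mathcal{RER}(V,\D)$), hence each fillable with uniformly bounded area; and (b) a loop entirely inside $\mathcal{RER}(V,\D)$ formed by the shadow of $(R_i)$ together with the segment of $(\widetilde R_j)$ and short connecting paths, which by Lemma~\ref{lemma:filling-in-map}(iii) can be filled with area quadratic in its length, i.e. $O((N+M)^2)$. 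Summing over the at most $O(N+M)$ blocks gives total area $O((N+M)^3)$. Finally the two ends of $\gamma$ — the given geodesics between $R_1, \widetilde R_1$ and between $R_N, \widetilde R_M$ — have length at most $k$, and matching them against the short vertical paths at the extreme block boundaries produces loops of uniformly bounded length (here the constant $k_3$ picks up its dependence on $k$), fillable with uniformly bounded area.

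The main obstacle I expect is the second task: establishing the ladder, i.e. producing for each $R_i$ a canonical nearby vertex with the enlarged support system $\D$ and controlling the length of the vertical paths and the perimeters of the quadrilaterals uniformly. This is exactly where one must invoke Corollary~\ref{cor:extend-or-pick-racks}: part (i) converts a $\D$--splitting sequence into a compatible full $\D$--splitting sequence with controlled endpoints, and part (ii) goes back. Applied block-by-block (or to suitable sub-sequences), these give the horizontal sides of the ladder with the two sides $k_1$-close, and then connectedness of $\mathcal{RER}(V,\D)$ together with the finiteness statement underlying Lemma~\ref{lemma:filling-in-map}(i) bounds the vertical connecting paths and the quadrilateral perimeters uniformly in the genus. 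One must be slightly careful that the intermediate full $\D$--splitting sequences produced by the Corollary genuinely agree with the block decomposition (this uses that the returning ropes defining the splits are shared between $R_i$ and its shadow), but this is bookkeeping rather than a genuine difficulty.
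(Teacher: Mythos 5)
Your overall accounting (one loop per block, filled quadratically via Lemma~\ref{lemma:filling-in-map}, summed over $O(N+M)$ blocks) matches the shape of the paper's argument, but the construction you base it on has a genuine gap: the ``ladder''. You claim that for \emph{each} index $i$ inside a block there is a vertical edge-path of \emph{uniformly bounded} length from $R_i$ into $\mathcal{RER}(V,\D_b)$ (the subgraph for the enlarged support system), landing near the ``compatible'' $\widetilde R_{j(i)}$. Neither assertion is justified, and both fail in general. Inside a block, the definition of a $\D$--splitting sequence only requires $(R_j)$ to be a uniform quasi-geodesic in $\mathcal{RER}(V,\Sigma_k)$; the ropes of such racks may intersect the extra disks of $\D_b\setminus\Sigma_k$ arbitrarily much (e.g.\ apply powers of a handlebody mapping class fixing $\Sigma_k$ but twisting about a curve crossing the extra disks), so a rack in the middle of a block can be arbitrarily far from the whole subgraph $\mathcal{RER}(V,\D_b)$ --- not just from $\widetilde R_{j(i)}$. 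Moreover, compatibility is a statement about support systems only; it gives no pointwise correspondence $i\mapsto j(i)$ between the two quasi-geodesics within a block, so ``landing near $\widetilde R_{j(i)}$'' has no content. Your appeal to Corollary~\ref{cor:extend-or-pick-racks} does not repair this: that corollary produces \emph{some} full splitting sequence with controlled endpoints of the whole sequence (and only exponentially bounded length); it says nothing about uniformly short connections at every intermediate index. Note also that if your ladder existed, the two sequences would fellow-travel and one would get an essentially quadratic filling; the fact that the lemma only asserts a cubic bound reflects precisely that such pointwise proximity is unavailable.

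The paper's proof avoids this by connecting the two sequences only at the block transitions. At the indices $i_k,j_k$ where the support systems change, the new disks arise by surgery along returning ropes, so the $1$--skeleta of $R_{i_k}, R_{i_k+1}, \widetilde R_{j_k}, \widetilde R_{j_k+1}$ meet $\partial\Sigma_k\cup\partial\Sigma_{k+1}$ in uniformly few points; hence there are extended racks $U_1,U_2$ with support system $\Sigma_k\cup\Sigma_{k+1}$ uniformly close to $R_{i_k}$ and $\widetilde R_{j_k}$, and one takes a geodesic between them in $\mathcal{RER}(V,\Sigma_k\cup\Sigma_{k+1})$. These connecting paths $c_k,d_k$ are \emph{not} uniformly short: their length is only bounded, coarsely, by $N+M$, using the quasi-isometric embedding (undistortedness) statement of Lemma~\ref{lemma:filling-in-map}. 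Each block then contributes a loop of length $O(N+M)$ inside a single subgraph $\mathcal{RER}(V,\Sigma_k)$, filled with area $O((N+M)^2)$ by the quadratic isoperimetric inequality, plus a thin loop between the fellow-traveling paths $d_k$ and $c_{k+1}$ filled linearly; summing over $O(N+M)$ blocks gives the cubic bound. If you replace your uniformly short rungs by such transition-only connections of length $O(N+M)$, your decomposition becomes the paper's proof.
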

\begin{proof}
  The idea of the proof is to inductively decompose the loop $\gamma$ into
  smaller loops, each of which can be filled with area at most
  $k_3(N+M)^2$ for a suitable $k_3$.

  Denote the disk exchange sequence associated to $R_i$ by $\Sigma_i, i=1,\dots,n)$ 
  and the full disk replacement sequence associated to $\widetilde{R}_j$
  by $\D_j, j=1,\ldots m$.
  Let $r:\{1,\ldots,m\}\to\{1,\ldots,n\}$ be the monotone
  non-decreasing surjective
  function given by compatibility, i.e. $\Sigma_{r(j)} \subset \D_j$
  for all $j=1,\ldots, m$.

  We define
  $$I(i) = \{ k\mid \Sigma(R_k) = \Sigma_i \}$$
  and
  $$J(i) = \{ k\mid \D(\widetilde{R}_k)=\D_l \mbox{ and } r(l)=i \}.$$
  Put $i_k = \max I(k)$ and $j_k = \max J(k)$. We will inductively
  choose paths $d_k$ connecting $R_{i_{k}}$ to $\widetilde{R}_{j_k}$
  and paths $c_k$ connecting $R_{i_k + 1}$ to $\widetilde{R}_{j_k +
  1}$ with the following properties.
  \begin{enumerate}[i)]
     \item The path $c_k$ is a uniform quasigeodesic in
       $\mathcal{RER}(V,\Sigma_{k+1})$. 
     \item The path $d_k$ is a uniform quasigeodesic in
       $\mathcal{RER}(V,\Sigma_k)$. 
     \item The paths $c_{k+1}, d_k$ are uniform fellow-travelers, i.e. the
     Hausdorff distance between $c_{k+1}$ and $d_k$ is uniformly bounded.
  \end{enumerate}
  A family of paths with these properties implies the statement of the
  lemma in the following way.

  The restriction of the sequence $R_i$ to $I(k)$ and the restriction
  of $\widetilde{R}_j^{-1}$ to $J(k)$ form together with $c_{k-1}^{-1}$ and $d_k$ a
  loop $\gamma_k$ in $\mathcal{RER}(V,\Sigma_k)$. 
  The length of $c_{k-1}$ and $d_k$ is coarsely bounded by $N+M$ by
  the triangle inequality. Hence, the length of $\gamma_k$ can be
  coarsely bounded by $4(N+M)$. 
  Since  $\mathcal{RER}(V,\Sigma_k)$ admits a
  quadratic isoperimetric function, this loop can thus be filled with area
  bounded by $k_3(N+M)^2$ for some uniform constant $k_3$.

  Similarly, the paths $d_k^{-1}$ and $c_{k+1}$ , together with the edges connecting
  $R_{i_k}$ to $R_{i_k+1}$ and  $\widetilde{R}_{j_k}$ to
  $\widetilde{R}_{j_k+1}$ form a loop $\delta_k$. The length of
  $\delta_k$ can again be coarsely
  bounded by $2(N+M)$ using the triangle inequality. Since the paths
  $d_k$ and $c_k$ are fellow-travelers, $\delta_k$
  can be filled with area depending linearly on its length. 

  There are at most $2\max(N,M)$ loops
  $\gamma_k,\delta_k$. Hence, the concatenation of all the loops
  $\gamma_i$ and $\delta_j$ can be filled with area at most
  $k_3(N+M)^3$ (after possibly enlarging the constant $k_2$).
  The paths $c_i$ and $d_i$ occur
  in the concatenation of $\gamma_i$ and $\delta_j$ twice, with
  opposite orientations, except for $c_0$ and the last occurring arc $d_L$.
  As a consequence, the concatenation of the loops $\gamma_i$ and
  $\delta_j$ is, after erasing these opposite paths, uniformly
  close to $\gamma$ in the Hausdorff metric. Thus, $\gamma$ may also
  be filled with area bounded by $k_3(N+M)^2$ (again possibly
  increasing $k_3$).

  \medskip
  We now describe the inductive construction of the paths $c_k$ and
  $d_k$. We set $c_0=d_0$ to be the constant path $R_1$.
  Suppose that the paths $c_i, d_i$ are already constructed for
  $i=0,\ldots, k-1$. 

  The support systems of $R_{i_k}$ and $\widetilde{R}_{j_k}$ both contain
  $\Sigma_k$. We first construct the path $d_{k}$
  connecting $R_{i_k}$ and $\widetilde{R}_{j_k}$.
  
  Namely, the reduced disk systems $\Sigma_k$ and $\Sigma_{k+1}$ are
  disjoint. The 
  simple disk system $\Sigma_k\cup \Sigma_{k+1}$ is disjoint from the
  support systems of $R_{i_k}, R_{i_k+1}$ and
  $\widetilde{R}_{j_k}, \widetilde{R}_{j_k+1}$ by definition of a
  split. Furthermore, the $1$-skeleta of the cell decompositions of
  all four of these extended racks intersect
  $\partial\Sigma_k\cup\partial\Sigma_{k+1}$ in uniformly few points.
  Hence, there are rigid extended racks $U_1, U_2$ which have
  $\Sigma_k\cup\Sigma_{k+1}$ as their support system and such that $U_1$
  is uniformly close to $R_{i_k}$, and $U_2$ is uniformly close to
  $\widetilde{R}_{j_k}$ in $\mathcal{RER}(V)$. Let $e$ be a geodesic path in
  $\mathcal{RER}(V,\Sigma_k\cup\Sigma_{k+1})$ connecting $U_1$ and
  $U_2$. 
  Since  $\mathcal{RER}(V,\Sigma_k\cup\Sigma_{k+1})$ is undistorted in
  $\mathcal{RER}(V)$ by Lemma~\ref{lemma:filling-in-map}, the length of $e$ 
  is coarsely bounded by $N+M+1$.
  By adding uniformly short geodesic segments in $\mathcal{RER}(V,\Sigma_k)$
  at the beginning and the end of $e$, we obtain the path $d_{k}$ with
  property \textit{ii)}.

  By definition of $i_k$ and $j_k$, we have $i_k+1 \in I(k+1)$ and
  $j_k+1 \in J(k+1)$. Hence, both $R_{i_k+1}$ and
  $\widetilde{R}_{j_k+1}$ contain $\Sigma_{k+1}$ in their support
  systems. 
  We can thus define $c_{k}$ with properties \textit{i)} and
  \textit{iii)} by adding 
  uniformly short geodesic segments in $\mathcal{RER}(V,\Sigma_{k+1})$
  to the beginning and the end of $e$. 
\end{proof}
We have now collected all the tools for the proof of the main theorem.
\begin{proof}[Proof of Theorem~\ref{thm:main}]
  Recall that it suffices to show that every loop in the graph of rigid
  racks can be filled with area coarsely bounded by an exponential
  function of its length. 

  Let $R_i$ be a loop of length $L$ in the graph of rigid
  racks based at $R_0=\hat{R}$. 
  Let $\hat{\Sigma}$ be the disk system given by
  Theorem~\ref{thm:connect-extended} applied to $R' = R_0$. 
  Since the graph of rigid racks is
  quasi-isometric to the graph of extended rigid racks, we can
  consider $R_i$ as a loop in $\mathcal{RER}(V)$ and it suffices to
  show that this loop can be filled in $\mathcal{RER}(V)$ with area
  bounded exponentially in its length.

  The strategy of this proof is similar to the proof of
  Lemma~\ref{lem:compare-adjacent}: we will write the loop $(R_i)$ as
  a concatenation of smaller loops whose area we can control.  

  We will define paths $c_i$ in $\mathcal{RER}(V)$ with the following
  properties. 
  \begin{enumerate}
  \item The path $c_i$ connects $R_i$ to a rack which is uniformly
    close to $\hat{R}$ in $\mathcal{RER}(V)$.
  \item The path $c_i$ is a $\hat{\Sigma}$--splitting sequence of racks.
  \item The loop formed by $c_i, c_{i+1}$, the edge between $R_i$ and
    $R_{i+1}$, and a geodesic connecting other pair of endpoints of
    $c_i, c_{i+1}$ can be filled with area bounded by $e^{k_3L}$.
  \end{enumerate}
  As a consequence, the loop $(R_i)$ itself can be filled with area at
  most $Le^{k_3L}$, proving the theorem.

  \medskip
  The construction of the paths $c_i$ is again by induction.
  We set $c_0$ to be the constant path $R_0$. Suppose now that the
  path $c_k$ is already constructed. 
  Since $R_k$ and $R_{k+1}$ are connected by an edge in the graph of
  rigid racks, their support systems $\Sigma_k$ and $\Sigma_{k+1}$ are
  disjoint. Let $\Sigma_k^{(i)}, i=1,\ldots,n$ be the disk exchange
  sequence associated to the splitting sequence $c_k$. 
  Put $\D_1 = \Sigma_k \cup \Sigma_{k+1}$. Using Lemma~\ref{typical}
  we obtain a full disk replacement sequence $(\D_i)$ compatible with
  $(\Sigma_k^{(i)})$. Corollary~\ref{cor:extend-or-pick-racks} part
  \textit{i)} then yields a full splitting sequence
  $\widetilde{R}_k,k=1,\ldots,M$ with associated full disk exchange
  sequence $(\D_i)$. By Lemma~\ref{lemma:filling-in-map}, the loop
  formed by $c_k$ and $\widetilde{R}_k,k=1,\ldots,M$ can be filled
  with area bounded by $k_2(N+M)^3$, where $N$ is the length of the
  path $c_k$.
  
  Using Lemma~\ref{lemma:fulldisk} on the sequence $(\D_i)$ and the
  initial reduced disk system $(\Sigma_{k+1})$ we obtain a
  $\hat{\Sigma}$--splitting sequence $(\Sigma_{k+1}^{(i)})$ compatible
  with $(\D_i)$, which starts in $\Sigma_{k+1}$.
  Corollary~\ref{cor:extend-or-pick-racks} part \textit{ii)} now
  yields a $\hat{\Sigma}$--splitting sequence $c_{k+1}$ starting in
  $R_{k+1}$ and ending uniformly close to $\hat{R}$.
  Applying Lemma~\ref{lem:compare-adjacent} again, we see that the
  loop formed by $c_{k+1}$ and $\widetilde{R}_k,k=1,\ldots,M$ can be filled
  with area bounded by $k_3(N'+M)^3$, where $N'$ is the length of the
  path $c_{k+1}$.

  Since both $c_k$ and $c_{k+1}$ are splitting sequences connecting
  points which are of distance at most $L$, their lengths can be
  bounded by $k_4e^{k_4L}$ for a suitable $k_4$ by
  Theorem~\ref{thm:connect}. As a consequence, the paths $c_k$ and
  $c_{k+1}$ satisfy condition \textit{iii)}.
  This concludes the inductive construction of $c_i$ and the proof of
  the theorem.
\end{proof}

The proof of the theorem would give a polynomial bound for the
Dehn function provided that the length of the splitting paths
used to fill in loops had a length which is polynomial in the
distance between their endpoints. Unfortunately, however,
the following example show that such a bound does not exist.
This is similar to the behavior of paths of sphere systems
used in \cite{HV96} to show an exponential upper bound for the
Dehn function of ${\rm Out}(F_n)$, 

For simplicity of exposition, we do not construct these paths in the
graph of rigid racks (or the handlebody group), but instead in a
slightly simpler graph. The example given below can be extended to the
full graph of rigid racks in a straightforward fashion.

We define $\mathcal{RD}(V)$ to be the graph of reduced disk systems in
$V$. The vertex 
set of $\mathcal{RD}(V)$ is the set of isotopy classes of reduced
disk systems, and two such vertices are connected by an edge of length
one if the corresponding disk systems are disjoint. Every directed
disk exchange sequence defines an edge-path in
$\mathcal{RD}(V)$. The following example shows that
the length of  these edge-paths may be exponential in the distance
between their endpoints.
\begin{example}
  Consider a handlebody $V$ of genus $4$. 
  For each $n \in \NN$ we will construct a disk exchange sequence
  $\Sigma^{(n)}_1,\ldots, \Sigma^{(n)}_{N(n)}$ such that on the one
  hand, the length $N(n)$ of the sequence growth exponentially in
  $n$. On the other hand, the distance between endpoints $\Sigma^{(n)}_1$ and
  $\Sigma^{(n)}_{N(n)}$ in $\mathcal{RD}(V)$ grows linearly in $n$.
  To simplify the notation, in this example we will only construct
  the endpoint $\Sigma^{(n)}_{N(n)}$ and denote it by $\Sigma_n$.

  We choose three disjoint
  simple closed curves $\alpha_1,\alpha_2,\alpha_3$ which 
  decompose the surface $\partial V$ into a pair of pants, two
  once-punctured tori and a once-punctured genus $2$ surface (see
  Figure~\ref{fig:example}). 
  We may choose the $\alpha_i$ such that they bound disks in $V$. We
  denote the two solid tori in the complement of these disks by
  $T_1,T_2$ and the genus $2$ subhandlebody by $V'$.

  \begin{figure}[h!]
    \centering
    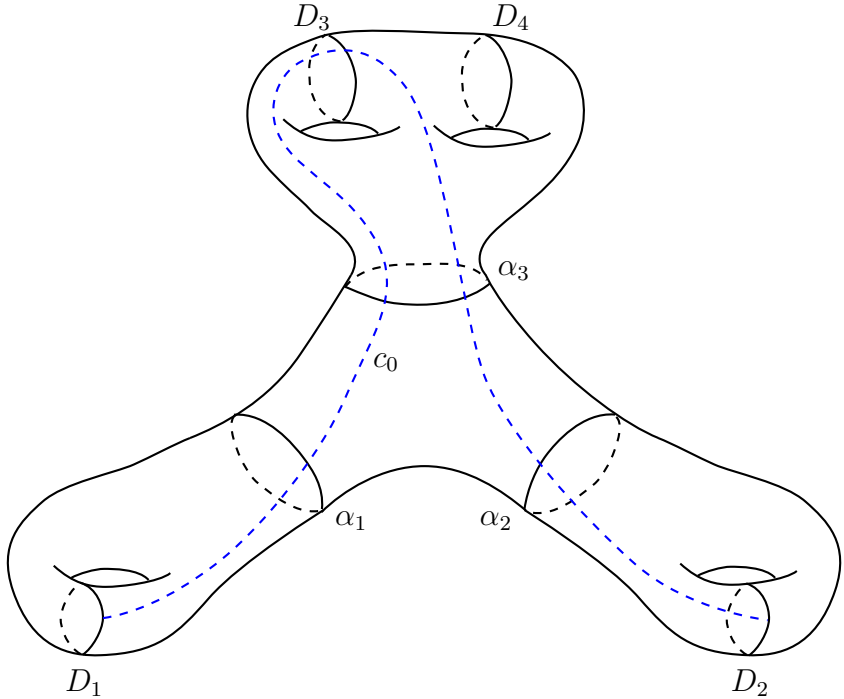
    \caption{The setup for the example of a non-optimal disk exchange
      path. An admissible arc is drawn dashed.}
    \label{fig:example}
  \end{figure}

  Let $\Sigma_0=\{D_1,D_2,D_3,D_4\}$ be a reduced disk system such
  that $D_1 \subset T_1, D_2 \subset T_2$ and $D_3,D_4 \subset V'$.
  Choose a base point $p$ on
  $\alpha_3$. Let $\gamma_1,\gamma_2$ be two disjointly embedded loops
  on $\partial V \cap V'$ based at $p$ with the following property.
  The loop $\gamma_1$ intersects the disk $D_3$ in a single point and
  is disjoint from $D_4$, while $\gamma_2$ intersects $D_4$ in a
  single point and is disjoint from $D_3$.
  Since the complement of $D_3 \cup D_4$ in $V'$ is simply connected,
  such a pair of loops generates the 
  fundamental group of $V'$. Denote the projections of $\gamma_1$ and
  $\gamma_2$ to $\pi_1(V',p)$ by $A_1$ and $A_2$, respectively.

  Let $c$ be an embedded arc on $\partial V$.
  We say that $c$ is \emph{admissible} if the following holds.
  The arc $c$ connects the disk $D_1$ to the disk
  $D_2$. The interior of $c$ intersects $\alpha_1$ and $\alpha_2$ in a single
  point each. Furthermore it intersects $\alpha_3$ in two points,
   and its interior is 
  disjoint from both $D_1$ and $D_2$. 

  Let $c$ be an admissible arc. The intersection of $c$ with $V'$
  is an embedded arc $c'$ connecting $\alpha_3$ to itself. The arc $c'$
  may be turned into an embedded arc in $V'$ based at $p$ by
  connecting the two endpoints of $c'$ to $p$ along $\alpha_3$. Since the
  curve $\alpha_3$ bounds a disk in $V'$, the image of this loop in 
  $\pi_1(V',p)$ is determined by the homotopy class of the arc $c$ relative
  to $\partial D_1,\partial D_2$.
  We call this image the \emph{element induced by the arc $c$}.

  Choose an admissible arc $c_0$ in 
  such a way that it intersects the disk $D_3$ in a single point, and
  is disjoint from $D_4$ (see Figure~\ref{fig:example} for an example). 
  Up to changing the orientation of $\gamma_1$ we may assume that the
  element induced by $c_0$ is $A_1$.

  We now describe a procedure that produces essential disks from
  admissible arcs. To this end, let $c$ be an admissible arc.
  Consider a regular neighborhood $U$ of $D_1\cup c\cup D_2$. Its
  boundary consists of three simple closed curves. Two of them are
  homotopic to either $\partial D_1$ or $\partial D_2$. The third one
  we denote by $\beta(c)$. Note that $\beta(c)$ bounds a nonseparating
  disk in $V$.

  Choose a fixed element $\varphi$ of the handlebody group of $V$ with
  the following properties. The mapping class $\varphi$ fixes the
  isotopy classes of the curves
  $\alpha_1,\alpha_2$ and $\alpha_3$. The restriction of $\varphi$ to 
  the complement of $V'$ is isotopic to the identity. 
  The restriction of
  $\varphi$ to $V'$ induces an automorphism of exponential growth type
  on $\pi_1(V')$.
  To be somewhat more precise, we may choose $\varphi$ such that it
  acts on the basis $A_i$ as the following automorphism $\Phi$:
  \begin{eqnarray*}
    A_1 &\mapsto& A_1A_2 \\
    A_2 &\mapsto& A_1^2A_2
  \end{eqnarray*}

  Put $c_n = \varphi^n(c_0)$ and $\beta_n = \beta(c_n)$.
  We claim that a disk exchange sequence in direction of $\beta_n$
  that makes $\beta_n$ disjoint 
  from $\Sigma_0$ has length at least $2^n$.

  To this end, note that the arc $c_n$ intersects the disks $D_3$ and
  $D_4$ in at least $2^n$ points. 
  Namely, the element of $\pi_1(V^\prime,p)$
  induced by $\varphi^n(c_0)$ is equal to
  $\Phi^n(A_1)$. The cyclically reduced word describing $\Phi^n(A_1)$
  in the basis $A_1, A_2$ has length at least $2^n$ by construction of $\Phi$.

  Therefore, the curve $\beta_n$ can be described as
  follows. Choose a parametrization $\beta_n:[0,1]\to \partial
  V$. Then there are numbers $0<t_1<\dots<t_N<t_{N+1}<\dots<t_{2N}<1$
  such that the following holds. Each subarc $\beta_n([t_i,t_{i+1}])$
  intersects $\Sigma_0$ only at its endpoints. The subarcs
  $\beta_n([t_N,t_{N+1}])$ and $\beta_n( [0,t_1] \cup [t_{2N},1] )$ are
  returning arcs to $\Sigma_0$.
  Furthermore, the arcs $\beta_n([t_i,t_{i+1}])$ and
  $\beta_n([t_{2N-i},t_{2N+1-i}])$ are homotopic relative to $\Sigma_0$
  for all $i=1,\ldots, N-1$.
  More generally, if there are numbers $t_i$ with these properties for
  a reduced disk system $\Sigma$ we say that \emph{$\beta_n$ is a long
    string of rectangles with respect to $\Sigma$}. The number $N$ is
  then called \emph{the length of the string of rectangles}.
  By construction, the length $N$ of the string of rectangles
  $\beta_n$ defines with respect to $\Sigma_0$ is at least $2^n$.

  The curve $\beta_n$ has two returning arcs. Let $a$ be one of
  them, say $\beta_n([t_N,t_{N+1}])$ and let $\sigma \in \Sigma_0$
  denote the disk containing the endpoints of $a$. 
  One of the disks obtained by simple surgery along $a$ is isotopic to
  either $D_1$ or $D_2$ (depending on which returning arc we chose).
  The preferred interval defined by $a$ contains every intersection
  point of $\beta_n$ with $\sigma$ except the endpoints of $a$.

  Denote by $\Sigma_1$ the reduced disk system obtained by
  simple surgery along $a$.
  By construction, the subarc $\beta_n(t_{N-1},t_{N+2})$ now defines a
  returning arc with respect to $\Sigma_1$. 
  One of the disks obtained by simple surgery along this returning arc
  is still properly isotopic to $D_1$.
  Furthermore, the subarcs
  $\beta_n([t_i,t_{i+1}])$ and $\beta_n([t_{2N-i},t_{2N+1-i}])$ are still
  arcs with endpoints on $\Sigma_1$ which are homotopic relative to $\Sigma_1$
  for all $i=1,\ldots, N-2$. Each of these arcs cannot be homotoped
  into $\partial \Sigma_1$.
  
  Hence the curve $\beta_n$ has a description as a string of
  rectangles of length $N-1$ with respect to $\Sigma_1$ and the
  argument can be iterated. By induction,
  it follows that any disk exchange sequence starting in $\Sigma_0$
  which ends in a disk system disjoint from $\beta_n$ has length at
  least $2^n$. 

  On the other hand, the growth of the distance between $\Sigma_0$ and
  $\varphi^n(\Sigma_0)$ in the graph of reduced disk systems is
  linear in $n$ by the triangle inequality. 
  The curve $\beta_n$ intersects $\varphi^n(\Sigma_0)$ in uniformly
  few points, and thus the disk system $\varphi^n(\Sigma_0)$ is
  uniformly close to a reduced disk system that is disjoint from
  $\beta_n$.
  Thus the disk systems $\Sigma_n$ have the properties described in
  the beginning of the example.
\end{example}

\end{document}